\documentclass[10pt, a4paper]{article}
\usepackage{amsfonts}
\usepackage{amsbsy}
\usepackage{amssymb}
\usepackage{amsmath}
\usepackage{amsmath,amscd}
\usepackage{amsthm}
\usepackage{fancyhdr}
\theoremstyle{plain}
\newtheorem{theorem}{Theorem}[section]

\newtheorem{corollary}{Corollary}[section]
\newtheorem{proposition}{Proposition}[section]
\theoremstyle{definition}

\newtheorem{remark}{Remark}[section]
\newtheorem{example}{Example}[section]

\topmargin 0.2cm \oddsidemargin 0.1cm \evensidemargin 0.1cm
\textwidth 14.9cm \textheight 20cm

\begin{document}

\title{An $1$--differentiable cohomology induced by a vector field}
\author{Mircea Crasmareanu, Cristian Ida and Paul Popescu}
\date{}
\maketitle
\begin{abstract}
A new cohomology, induced by a vector field, is defined on pairs of differential forms ($1$--differentiable forms) in a manifold. It is proved a link with the classical de Rham cohomology and an $1$-differentable cohomology of Lichnerowicz type associated to an one form. Also, the case when the manifold is complex and the vector field is holomorphic is studied. Finally, an application of this theory to the harmonicity of $1$-differentiable forms is studied in a particular case. 
\end{abstract}
\medskip 
\begin{flushleft}
\strut \textbf{2000 Mathematics Subject Classification:} 14F40; 57R99; 58A10; 58A12.

\textbf{Key Words:}   $1$--differentiable form, cohomology, Lie derivative, vector field, Laplacian, harmonic form.
\end{flushleft}

\section{Introduction}
\setcounter{equation}{0}
Let us consider the field $\Omega^0(M)=\mathcal{F}(M)$ of smooth real valued functions defined on a smooth manifold $M$. For each $p=1,\ldots,n=\dim M$ denote by $\Omega^{p}(M)$ the module of $p$-forms on $M$ and by $\Omega(M)=\oplus_{p \geq 0}\Omega^{p}(M)$ the exterior algebra of $M$. The cohomology of some pair  differential forms (or tensor fields) called  \textit{$1$-differentiable $p$-cochain $\widetilde{\varphi}=(\varphi,\psi)$}, where $\varphi$ is a $p$-form and $\psi$ is a $(p-1)$-form was initiated and intensively studied by Lichnerowicz in \cite{L1} in the context of symplectic and contact geometry and in \cite{L} in the context of Poisson geometry. Further signifiant developments of a such cohomology in the context of Lichnerowicz-Jacobi cohomology can be found, see for instance \cite{L-L-M1, L-L-M2, L-L-M}. Also, we notice that an harmonic and $C$--harmonic theory of $1$-differentiable forms on Sasakian manifolds was recently studied in \cite{I-M}. Another approach concerning calculus on $1$-differentiable forms (called also generalized forms) is proposed in \cite{N-R1, N-R2, R} where some interesting applications in mechanical and physical  field theories are ilustrated. 

On the other hand, another cohomology of some similar pair forms is so called \textit{relative cohomology} with respect to a smooth map $f:M^{\prime}\rightarrow M$ between two smooth manifolds, which is defined in \cite[p. 78]{B-T} as follows: if we consider the set $\Omega^p(f)=\Omega^p(M)\oplus\Omega^{p-1}(M^{\prime})$ and the operator
\begin{displaymath}
d_{f}:\Omega^p(f)\rightarrow\Omega^{p+1}(f)\,,\,d_f(\varphi,\psi)=(d\varphi,f^*\varphi-d\psi),
\end{displaymath}
then $d_f^2=(0,0)$. The cohomology of the differential complex $(\Omega^{\bullet}(f),d_f)$ is called the \textit{relative cohomology with respect to $f$}. We notice that the identity $d_f^2=(0,0)$ is based on the identity $[f^*,d]=0$, where we have denoted by $d$ the exterior derivative both on $M$ and $M^{\prime}$ and $[\cdot,\cdot]$ denotes the graded commutator of two operators which is defined by $[D_1,D_2]=D_1D_2-(-1)^{\deg D_1\cdot\deg D_2}D_2D_1$, see for instance \cite{Mich1}. This fact suggest that we can consider cohomology for a smooth manifold $M$  with respect to the differential complex $(\widetilde{\Omega}^{\bullet}(M),\widetilde{d})$, where $\widetilde{\Omega}^p(M)=\Omega^p(M)\oplus\Omega^{p-1}(M)$ and
\begin{displaymath}
\widetilde{d}:\widetilde{\Omega}^p(M)\rightarrow\widetilde{\Omega}^{p+1}(M)\,,\,\widetilde{d}(\varphi,\psi)=(D_1\varphi,D_2\varphi-D_1\psi),
\end{displaymath}
where $D_1$ is an operator of degree $1$ which satisfies $D_1^2=0$ and $D_2$ is an operator of degree $0$ such that $[D_1,D_2]=0$. 

Taking into account the remarkable identity $[\mathcal{L}_X,d]=0$, where $\mathcal{L}_X$ denotes the Lie derivative with respect to a vector field $X\in\mathcal{X}(M)$, then we can consider $D_1=d$ and $D_2=\mathcal{L}_X$ in the above definition of $\widetilde{d}$ and the resulting cohomology is named by us the \textit{$1$-differentiable cohomology induced by the vector field $X$}.

The aim of this paper is to study this new cohomology as well as some of its possible applications to harmonic theory for $1$-differentiable forms. The paper is organized as follows.

In the second section we define  some differential operators associated to a vector field $X$ on $M$ which act on the exterior graded algebra $(\widetilde{\Omega}^{\bullet}(M),\widetilde{\wedge})$, where $\widetilde{\wedge}$ is the exterior product of $1$-differentiable forms $\widetilde{\varphi}=(\varphi,\psi)$ defined in \cite{L1}. These operators, which are denoted by $\widetilde{d}_X$, $\widetilde{\imath}_X$ and $\widetilde{\mathcal{L}}_X$, represent a natural extension of the operators $d$, $\imath_X$ and $\mathcal{L}_X$, respectively on $\widetilde{\Omega}^{\bullet}(M)$. We prove some natural properties of these operators in relation with the classical properties of $d$, $\imath_X$ and $\mathcal{L}_X$, respectively. Next a cohomology induced by the vector field $X$ denoted by $\widetilde{H}_X^{\bullet}(M)$ is defined as the cohomology of the differential complex $(\widetilde{\Omega}^{\bullet},\widetilde{d}_X)$ and we prove an isomorphism of $\widetilde{H}_X^{\bullet}(M)$ with $H_{dR}^{\bullet}(M)\oplus H_{dR}^{\bullet-1}(M)$, where $H_{dR}^{\bullet}(M)$ denotes de Rham cohomology of $M$. In this way we recover an isomorphism of our cohomology with a classical $1$-differentiable cohomology of Lichnerowicz type associated to an one form. Also, some examples on symplectic manifolds are considered with respect to a symplectic, hamiltonian or Liouville vector field and a Hamilton type equation for $1$-differentiable forms is described. Next, when the manifold is complex and the vector field is holomorphic a similar approach is considered and when the manifold is compact K\"{a}hlerian we established an isomorphism with  $H^{p,\bullet}(M)\oplus H^{p,\bullet-1}(M)$, where $H^{p,\bullet}(M)$ denotes the $p$-th Dolbeault cohomology of $M$. In the third section we consider a generalization of this cohomology with respect to a smooth map $f:M^\prime\rightarrow M$ obtaining a \textit{relative cohomology induced by the vector field $X$}. Then some similar properties as for $\widetilde{d}_X$-cohomology are investigated in this more general case.

In the last section we discuss some aspects concerning harmonicity of $1$-differentiable forms with respect to our operator associated to a Killing vector field in a particular case when $M$ is a $n$-dimensional compact smooth manifold endowed with  a closed $1$-form $\omega$ and with a Riemannian metric $g$ such that $\omega$ is parallel with respect to $g$.

\section{$1$-differentiable cohomology induced by a vector field}
\setcounter{equation}{0}
\subsection{Calculus on $1$-differentiable forms associated to a vector field}
Let us consider  the set $\widetilde{\Omega}^p(M)=\Omega^p(M)\oplus\Omega^{p-1}(M)$. As in formula (5.5) from \cite{L}, we can define a {\it pair wedge product} $\widetilde{\wedge }:\widetilde{\Omega }^p(M)\times \widetilde{\Omega }^{p^{\prime}}(M)\rightarrow \widetilde{\Omega }^{p+p^{\prime}}(M)$ by
\begin{equation}
(\varphi , \psi )\widetilde{\wedge }(\varphi ^{\prime}, \psi ^{\prime})=(\varphi \wedge\varphi ^{\prime},(-1)^p\varphi \wedge\psi ^{\prime}+\psi \wedge\varphi ^{\prime})
\label{II2}
\end{equation}
to be the exterior product on the space $\widetilde{\Omega}^{\bullet}(M)$, where $(\varphi ,\psi )\in \widetilde{\Omega }^p(M),\,(\varphi ^{\prime},\psi ^{\prime})\in \widetilde{\Omega }^{p^{\prime}}(M)$. By this definition, we notice that for an $1$-differentiable $0$-form $(f,0)$, where $f$ is  a smooth function on $M$ we have $(f,0)\cdot(\varphi,\psi)=(f\varphi,f\psi)$. Also, one easily verifies that
\begin{displaymath}
(\varphi,\psi)\widetilde{\wedge}\left((\varphi^{\prime},\psi^{\prime})+(\varphi^{\prime\prime},\psi^{\prime\prime})\right)=(\varphi,\psi)\widetilde{\wedge}(\varphi^{\prime},\psi^{\prime})+(\varphi,\psi)\widetilde{\wedge}(\varphi^{\prime\prime},\psi^{\prime\prime}),
\end{displaymath}
\begin{displaymath}
(\varphi,\psi)\widetilde{\wedge}\left((\varphi^{\prime},\psi^{\prime})\widetilde{\wedge}(\varphi^{\prime\prime},\psi^{\prime\prime})\right)=\left((\varphi,\psi)\widetilde{\wedge}(\varphi^{\prime},\psi^{\prime})\right)\widetilde{\wedge}(\varphi^{\prime\prime},\psi^{\prime\prime})
\end{displaymath}
and
\begin{displaymath}
(\varphi,\psi)\widetilde{\wedge}(\varphi^{\prime},\psi^{\prime})=(-1)^{p p^{\prime}}(\varphi^{\prime},\psi^{\prime})\widetilde{\wedge}(\varphi,\psi),
\end{displaymath}
which say that $(\widetilde{\Omega}^{\bullet}(M),\widetilde{\wedge})$ is a graded algebra.

Now we consider $X\in\mathcal{X}(M)$ a fixed vector field on $M$ and we define the following differential operators acting on $\widetilde{\Omega}^{\bullet}(M)$:
\begin{equation}
\widetilde{d}_X:\widetilde{\Omega }^p(M)\rightarrow \widetilde{\Omega }^{p+1}(M), \quad \widetilde{d}_X(\varphi , \psi )=(d\varphi , \mathcal{L}_X\varphi -d\psi ).
\label{II1}
\end{equation}
\begin{equation}
\widetilde{\imath}_X:\widetilde{\Omega }^p(M)\rightarrow \widetilde{\Omega }^{p-1}(M)\,,\,\widetilde{\imath}_X(\varphi , \psi )=(\imath_X\varphi , -\imath_X\psi )
\label{II9}
\end{equation}
and
\begin{equation}
\widetilde{\mathcal{L}}_X:\widetilde{\Omega }^p(M)\rightarrow\widetilde{\Omega }^{p}(M)\,,\,\widetilde{\mathcal{L}}_X(\varphi , \psi )=(\mathcal{L}_X\varphi , \mathcal{L}_X\psi ).
\label{II10}
\end{equation}
By direct calculus using \eqref{II2}-\eqref{II10} and the classical relations concerning the operators $d$, $i_X$ and $\mathcal{L}_X$ the following relations hold:
\begin{equation}
\widetilde{d}_X((\varphi , \psi )\widetilde{\wedge}(\varphi ^{\prime}, \psi ^{\prime}))=\widetilde{d}_X(\varphi , \psi )\widetilde{\wedge}(\varphi ^{\prime}, \psi ^{\prime})+(-1)^p(\varphi , \psi )\widetilde{\wedge}\widetilde{d}_X(\varphi ^{\prime}, \psi ^{\prime}),
\label{II3}
\end{equation}
\begin{equation}
\widetilde{\imath}_X((\varphi , \psi )\widetilde{\wedge}(\varphi ^{\prime}, \psi ^{\prime}))=\widetilde{\imath}_X(\varphi , \psi )\widetilde{\wedge}(\varphi ^{\prime}, \psi ^{\prime})+(-1)^p(\varphi , \psi )\widetilde{\wedge}\,\widetilde{\imath}_X(\varphi ^{\prime}, \psi ^{\prime}),
\label{II11}
\end{equation}
and
\begin{equation}
\widetilde{\mathcal{L}}_X((\varphi , \psi )\widetilde{\wedge}(\varphi ^{\prime}, \psi ^{\prime}))=\widetilde{\mathcal{L}}_X(\varphi , \psi )\widetilde{\wedge}(\varphi ^{\prime}, \psi ^{\prime})+(\varphi , \psi )\widetilde{\wedge}\widetilde{\mathcal{L}}_X(\varphi ^{\prime}, \psi ^{\prime}),
\label{IIx11}
\end{equation}
which say that $\widetilde{d}_X$ is an antiderivation of degree $1$, $\widetilde{\imath}_X$ is an antiderivation of degree $-1$ and $\widetilde{\mathcal{L}}_X$ is a derivation of degree $0$, respectively, in the exterior graded algebra $(\widetilde{\Omega }^p(M), \widetilde{\wedge})$.

Using the above definitions and the classical properties concerning the operators $d$, $\imath_X$ and $\mathcal{L}_X$ we easily obtain:
\begin{equation}
\widetilde{d}_X^2=(0,0)\,,\,\widetilde{\imath}_X^2=(0,0)\,,\,[\widetilde{\mathcal{L}}_X, \widetilde{\mathcal{L}}_Y]=\widetilde{\mathcal{L}}_{[X, Y]}\,,\,[\widetilde{\mathcal{L}}_X, \widetilde{\imath}_Y]=\widetilde{\imath}_{[X, Y]}.
\label{II13}
\end{equation}
For instance the first relation from \eqref{II13} it follows by direct calculus in \eqref{II1} using $[\mathcal{L}_X,d]=0$.

Also, we have
\begin{proposition}
If $Y\in \mathcal{X}(M)$ is a Lie symmetry of $X$ i.e. $[X, Y]=0$ then the following remarkable identities holds:
\begin{equation}
\widetilde{\mathcal{L}}_Y=\widetilde{d}_X\widetilde{\imath}_Y+\widetilde{\imath}_Y\widetilde{d}_X\,,\,[\widetilde{\mathcal{L}}_Y, \widetilde{d}_X]=(0,0).
\label{II14}
\end{equation}
\end{proposition}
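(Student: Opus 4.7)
The plan is to verify both identities in \eqref{II14} by componentwise direct computation on an arbitrary pair $(\varphi,\psi)\in\widetilde{\Omega}^p(M)$, using the definitions \eqref{II1}, \eqref{II9}, \eqref{II10} together with the three classical facts on ordinary forms: the Cartan magic formula $\mathcal{L}_Y=d\imath_Y+\imath_Y d$, the commutation $[\mathcal{L}_X,\imath_Y]=\imath_{[X,Y]}$, and $[\mathcal{L}_Y,d]=0$.

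For the Cartan-type formula $\widetilde{\mathcal{L}}_Y=\widetilde{d}_X\widetilde{\imath}_Y+\widetilde{\imath}_Y\widetilde{d}_X$, I would first expand
\[
\widetilde{d}_X\widetilde{\imath}_Y(\varphi,\psi)=\widetilde{d}_X(\imath_Y\varphi,-\imath_Y\psi)=(d\imath_Y\varphi,\,\mathcal{L}_X\imath_Y\varphi+d\imath_Y\psi),
\]
\[
\widetilde{\imath}_Y\widetilde{d}_X(\varphi,\psi)=\widetilde{\imath}_Y(d\varphi,\mathcal{L}_X\varphi-d\psi)=(\imath_Y d\varphi,\,-\imath_Y\mathcal{L}_X\varphi+\imath_Y d\psi).
\]
The first components add up to $(d\imath_Y+\imath_Y d)\varphi=\mathcal{L}_Y\varphi$. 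In the second components, the pieces containing $\psi$ combine to $(d\imath_Y+\imath_Y d)\psi=\mathcal{L}_Y\psi$, while the remaining $\varphi$-terms yield $[\mathcal{L}_X,\imath_Y]\varphi=\imath_{[X,Y]}\varphi$, which vanishes by hypothesis. Thus the sum equals $(\mathcal{L}_Y\varphi,\mathcal{L}_Y\psi)=\widetilde{\mathcal{L}}_Y(\varphi,\psi)$.

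For the commutation identity $[\widetilde{\mathcal{L}}_Y,\widetilde{d}_X]=(0,0)$, the shortest route is to apply $\widetilde{d}_X$ to the identity just proved and invoke $\widetilde{d}_X^{\,2}=(0,0)$ from \eqref{II13}: on the left this gives $\widetilde{d}_X\widetilde{\mathcal{L}}_Y=\widetilde{d}_X\widetilde{\imath}_Y\widetilde{d}_X$, and composing on the right instead gives $\widetilde{\mathcal{L}}_Y\widetilde{d}_X=\widetilde{d}_X\widetilde{\imath}_Y\widetilde{d}_X$, so the two coincide. Alternatively I can verify it directly: the first component of $[\widetilde{\mathcal{L}}_Y,\widetilde{d}_X](\varphi,\psi)$ is $[\mathcal{L}_Y,d]\varphi=0$, and the second is $[\mathcal{L}_Y,\mathcal{L}_X]\varphi+[d,\mathcal{L}_Y]\psi=\mathcal{L}_{[Y,X]}\varphi=0$ by the assumption.

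No serious obstacle is expected; the only point requiring care is the sign in $\widetilde{\imath}_Y(\varphi,\psi)=(\imath_Y\varphi,-\imath_Y\psi)$, which is precisely what makes the four $\psi$-contributions in the second component collapse to $\mathcal{L}_Y\psi$ rather than cancelling. Everything else is bookkeeping on top of the classical Cartan calculus, with the hypothesis $[X,Y]=0$ entering exactly through the vanishing of $\imath_{[X,Y]}$ and $\mathcal{L}_{[X,Y]}$.
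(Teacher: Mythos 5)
Your computation is correct and matches the route the paper intends: the paper states this proposition without writing out a proof (it is of the same ``direct calculus using \eqref{II1}--\eqref{II10} and the classical Cartan identities'' type as the surrounding relations \eqref{II13}), and your componentwise expansion, with the hypothesis $[X,Y]=0$ entering exactly through $\imath_{[X,Y]}=0$ and $\mathcal{L}_{[Y,X]}=0$, is precisely that verification. Both of your arguments for the second identity (deducing it from the first via $\widetilde{d}_X^{\,2}=(0,0)$, or checking it directly) are valid.
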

As a consequence one gets
\begin{equation}
\widetilde{\mathcal{L}}_X=\widetilde{d}_X\widetilde{\imath}_X+\widetilde{\imath}_X\widetilde{d}_X\,,\,[\widetilde{\mathcal{L}}_X, \widetilde{d}_X]=(0,0).
\label{IIx14}
\end{equation}
Let $f:M^{\prime}\rightarrow M$ be a smooth map between two smooth manifolds. We define $\widetilde{f}^*:\widetilde{\Omega}^p(M)\rightarrow\widetilde{\Omega}^p(M^{\prime})$ by
\begin{equation}
\widetilde{f}^*(\varphi,\psi)=(f^*\varphi,f^*\psi)\,,\forall\,(\varphi,\psi)\in\widetilde{\Omega}^p(M).
\label{x01}
\end{equation}
It is well known that if $f:M^{\prime}\rightarrow M$ is a diffeomorphism, then we have 
\begin{equation}
f^*d\varphi=df^*\varphi\,,\,f^*(\imath_{f_*X}\varphi)=\imath_X(f^*\varphi)\,,\,f^*(\mathcal{L}_{f_*X}\varphi )=\mathcal{L}_{X}(f^*\varphi ),
\label{1}
\end{equation}
for any $X\in \mathcal{X}(M^{\prime})\,,\,\varphi \in \Omega ^{\bullet}(M)$, and by direct calculus we obtain
\begin{proposition}
If $f:M^{\prime}\rightarrow M$ is a diffeomorphism the following relations hold:
\begin{equation}
\widetilde{d}_X\widetilde{f}^*=\widetilde{f}^*\widetilde{d}_{f_*X}\,,\,\widetilde{f}^*\widetilde{\imath}_{f_*X}=\widetilde{\imath}_X\widetilde{f}^*\,,\,\widetilde{f}^*\widetilde{\mathcal{L}}_{f_*X}=\widetilde{\mathcal{L}}_{X}\widetilde{f}^*\,,\, X\in \mathcal{X}(M^{\prime}).
\label{01}
\end{equation}
\end{proposition}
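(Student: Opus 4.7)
The plan is to verify each of the three identities componentwise, reducing everything to the classical naturality relations \eqref{1} for $d$, $\imath_X$ and $\mathcal{L}_X$ under the diffeomorphism $f$. Since both sides of each equation are defined by plugging classical operators into the formulas \eqref{II1}, \eqref{II9}, \eqref{II10} for $\widetilde{d}_X$, $\widetilde{\imath}_X$ and $\widetilde{\mathcal{L}}_X$, the verifications should be essentially mechanical.

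First, I would fix $(\varphi,\psi)\in\widetilde{\Omega}^p(M)$ and expand $\widetilde{f}^*\widetilde{d}_{f_*X}(\varphi,\psi)$ using \eqref{II1} and \eqref{x01}; this yields the pair $(f^*d\varphi,\, f^*\mathcal{L}_{f_*X}\varphi - f^*d\psi)$. Expanding $\widetilde{d}_X\widetilde{f}^*(\varphi,\psi)$ instead gives $(d f^*\varphi,\, \mathcal{L}_X f^*\varphi - d f^*\psi)$. The two pairs coincide upon applying the classical identities $f^*d=df^*$ (on both $\varphi$ and $\psi$) and $f^*\mathcal{L}_{f_*X}=\mathcal{L}_X f^*$ from \eqref{1}, giving the first relation of \eqref{01}.

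Next, for the interior product I would compute $\widetilde{f}^*\widetilde{\imath}_{f_*X}(\varphi,\psi) = (f^*\imath_{f_*X}\varphi, -f^*\imath_{f_*X}\psi)$ directly from \eqref{II9} and \eqref{x01}, and similarly $\widetilde{\imath}_X\widetilde{f}^*(\varphi,\psi) = (\imath_X f^*\varphi, -\imath_X f^*\psi)$; equality of these two pairs follows at once from $f^*\imath_{f_*X}=\imath_X f^*$. The Lie derivative identity is even more immediate: both $\widetilde{f}^*\widetilde{\mathcal{L}}_{f_*X}(\varphi,\psi)$ and $\widetilde{\mathcal{L}}_X\widetilde{f}^*(\varphi,\psi)$ evaluate to $(f^*\mathcal{L}_{f_*X}\varphi, f^*\mathcal{L}_{f_*X}\psi)$ and $(\mathcal{L}_X f^*\varphi, \mathcal{L}_X f^*\psi)$ respectively, which agree componentwise by the third classical identity in \eqref{1}.

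No step presents any real obstacle: the diffeomorphism hypothesis is used precisely to ensure that $f_*X$ is a well-defined vector field on $M$ and that the three naturality relations in \eqref{1} hold, so the only thing to be careful about is bookkeeping of signs (in particular the minus sign in the second component of $\widetilde{\imath}_X$, which appears symmetrically on both sides and therefore cancels). Consequently a clean write-up just exhibits the three componentwise computations side by side and cites \eqref{1} at each step.
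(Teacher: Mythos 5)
Your proposal is correct and coincides with the paper's argument: the paper itself only asserts the proposition follows ``by direct calculus'' from the naturality relations \eqref{1}, and your componentwise expansion of each identity via \eqref{II1}, \eqref{II9}, \eqref{II10} and \eqref{x01} is exactly that computation. Nothing is missing.
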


\subsection{$\widetilde{d}_X$-cohomology}

As well as we seen by the first relation of \eqref{II13} we have a differential complex $(\widetilde{\Omega }^{\bullet}(M), \widetilde{d}_X)$ and denote by  $\widetilde{H}_X^{\bullet}(M)$ the cohomology of this complex. We will call it the \textit{$1$-differentiable cohomology induced by the vector field} $X$ of the smooth manifold $M$ or $\widetilde{d}_X$-cohomology of $M$. 

We have
\begin{proposition}
The exterior product from \eqref{II2} induces, on the space of $\widetilde{d}_X$-cohomology classes, a cohomology algebra structure.
\end{proposition}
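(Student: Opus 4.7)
The plan is to verify that $\widetilde{\wedge}$ descends to a well-defined graded product on cohomology classes, and then observe that the algebra axioms for the quotient are inherited from those already established on $\widetilde{\Omega}^{\bullet}(M)$. Concretely, writing $\widetilde{Z}_X^{p}(M)=\ker \widetilde{d}_X\cap \widetilde{\Omega}^p(M)$ and $\widetilde{B}_X^{p}(M)=\mathrm{im}\, \widetilde{d}_X\cap \widetilde{\Omega}^p(M)$, I would need to show that $\widetilde{Z}_X^{\bullet}(M)$ is a graded subalgebra of $(\widetilde{\Omega}^{\bullet}(M),\widetilde{\wedge})$ and that $\widetilde{B}_X^{\bullet}(M)$ is a graded two-sided ideal in $\widetilde{Z}_X^{\bullet}(M)$.

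First I would establish the subalgebra statement: if $(\varphi,\psi)\in\widetilde{Z}_X^p(M)$ and $(\varphi',\psi')\in\widetilde{Z}_X^{p'}(M)$, then by the antiderivation property \eqref{II3},
\begin{displaymath}
\widetilde{d}_X\bigl((\varphi,\psi)\widetilde{\wedge}(\varphi',\psi')\bigr)=\widetilde{d}_X(\varphi,\psi)\widetilde{\wedge}(\varphi',\psi')+(-1)^p(\varphi,\psi)\widetilde{\wedge}\widetilde{d}_X(\varphi',\psi')=(0,0),
\end{displaymath}
so the wedge of two $\widetilde{d}_X$-closed $1$-differentiable forms is again $\widetilde{d}_X$-closed.

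Next I would show the ideal property: if $(\varphi,\psi)=\widetilde{d}_X(\alpha,\beta)$ is exact of degree $p$ and $(\varphi',\psi')$ is closed of degree $p'$, then, again by \eqref{II3},
\begin{displaymath}
(\varphi,\psi)\widetilde{\wedge}(\varphi',\psi')=\widetilde{d}_X(\alpha,\beta)\widetilde{\wedge}(\varphi',\psi')=\widetilde{d}_X\bigl((\alpha,\beta)\widetilde{\wedge}(\varphi',\psi')\bigr)-(-1)^{p-1}(\alpha,\beta)\widetilde{\wedge}\widetilde{d}_X(\varphi',\psi'),
\end{displaymath}
and the last term vanishes since $(\varphi',\psi')$ is closed, so the product is exact. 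A symmetric computation (or the graded commutativity of $\widetilde{\wedge}$ already recorded in the excerpt) handles the case where the exact factor sits on the right. Hence the formula
\begin{displaymath}
[(\varphi,\psi)]\widetilde{\wedge}[(\varphi',\psi')]:=[(\varphi,\psi)\widetilde{\wedge}(\varphi',\psi')]
\end{displaymath}
is independent of the choice of representatives and defines a graded bilinear product on $\widetilde{H}_X^{\bullet}(M)$.

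Finally, associativity, graded commutativity and bilinearity of this induced product follow immediately by passing the corresponding identities for $\widetilde{\wedge}$ on $\widetilde{\Omega}^{\bullet}(M)$ (recalled just before \eqref{II1}) to cohomology classes, so $(\widetilde{H}_X^{\bullet}(M),\widetilde{\wedge})$ becomes a graded-commutative algebra. There is no genuine obstacle in this argument: the whole statement is a mechanical consequence of the Leibniz-type identity \eqref{II3}, and the only point worth being careful about is keeping track of the sign $(-1)^{p-1}$ when moving $\widetilde{d}_X$ across a factor of degree $p-1$ in order to verify that the ideal property holds on both sides.
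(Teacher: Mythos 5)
Your argument is correct and is exactly the standard one: the paper states this proposition without proof, treating it as a routine consequence of the antiderivation property \eqref{II3}, and your verification that $\widetilde{d}_X$-closed forms are a graded subalgebra and $\widetilde{d}_X$-exact forms a (two-sided, via graded commutativity) ideal is precisely what is needed, with the sign $(-1)^{p-1}$ handled correctly. Nothing is missing.
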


If $f:M^{\prime}\rightarrow M$ is a diffeomorphism between two smooth manifolds then the first relation \eqref{01} says that the map $\widetilde{f}^*$ from \eqref{x01} induces the following map in the cohomology level:
\begin{equation}
\widetilde{f}^{\bullet}:\widetilde{H}_{f_*X}^{\bullet}(M)\rightarrow \widetilde{H}_{X}^{\bullet}(M^{\prime})\,,\,\widetilde{f}^{\bullet}[(\varphi , \psi )]=[\widetilde{f}^*(\varphi , \psi )]\,,\,\forall\,X\in\mathcal{X}(M^{\prime}).
\label{2}
\end{equation}

We notice that a cohomology class in $\widetilde{H}^{\bullet}_X(M)$ is represented by a closed form $\varphi $ whose Lie derivative with respect to $X$ is an exact form. Because for any closed form $\varphi$ we have $\mathcal{L}_X\varphi=d\imath_X\varphi$ (that is $\mathcal{L}_X\varphi$ is an exact form), then it is easy to see that we have the natural maps in cohomology:
\begin{displaymath}
H_{dR}^{\bullet}(M)\mapsto \widetilde{H}_X^{\bullet}(M)\,,\,[\varphi ]\mapsto [(\varphi , \imath_X\varphi )],
\end{displaymath}
\begin{displaymath}
\widetilde{H}_X^{\bullet}(M)\mapsto H_{dR}^{\bullet-1}(M)\,,\,[(\varphi,\psi) ]\mapsto [\imath_X\varphi-\psi].
\end{displaymath}
Moreover the map
\begin{displaymath}
\widetilde{H}^{\bullet}_X(M)\mapsto H^{\bullet}_{dR}(M)\oplus H^{\bullet-1}_{dR}(M)\,,\,[(\varphi,\psi)]\mapsto  ([\varphi],[\imath_X\varphi-\psi])
\end{displaymath}
with the inverse
\begin{displaymath}
H^{\bullet}_{dR}(M)\oplus H^{\bullet-1}_{dR}(M)\mapsto \widetilde{H}^{\bullet}_X(M)\,,\,([\varphi],[\psi])\mapsto[(\varphi,\imath_X\varphi-\psi)]
\end{displaymath}
establishes the following isomorphism:
\begin{equation}
\widetilde{H}^{\bullet}_X(M)\cong H^{\bullet}_{dR}(M)\oplus H^{\bullet-1}_{dR}(M).
\label{IIx}
\end{equation}
\begin{remark}
This cohomology is isomorphic with an $1$-differentiable cohomology associated to an $1$-form of Lichnerowicz type. More exactly, for any  $\eta\in\Omega^1(M)$ we consider the operator defined by
\begin{equation}
\widetilde{d}_{\eta}:\widetilde{\Omega}^p(M)\rightarrow\widetilde{\Omega}^{p+1}(M)\,\,\widetilde{d}_{\eta}(\varphi,\psi)=(d\varphi-d\eta\wedge\psi,-d\psi).
\label{f1}
\end{equation}
By direct calculus we gets $\widetilde{d}_{\eta}^2=(0,0)$ and we have a differential complex $(\widetilde{\Omega }^{\bullet}(M), \widetilde{d}_{\eta})$. Denote by  $\widetilde{H}_{\eta}^{\bullet}(M)$ the cohomology of this complex and we will call it the \textit{$1$-differentiable cohomology associated to the $1$-form $\eta$} of the smooth manifold $M$ or $\widetilde{d}_{\eta}$-cohomology of $M$. This cohomology is a particular case of a more general \textit{$1$-differentiable cohomology} firstly defined by Lichnerowicz in \cite{L1} in the context of symplectic and contact manifolds and \cite{L} in the context of Poisson manifolds, respectively.  Also, according to \cite{L-L-M2,L1,L} we have $\widetilde{H}_{\eta}^{\bullet}(M)\cong H^{\bullet}_{dR}(M)\oplus H^{\bullet-1}_{dR}(M)$ and the isomophism \eqref{IIx} says that $\widetilde{H}_{X}^{\bullet}(M)\cong\widetilde{H}_{\eta}^{\bullet}(M)$.
\end{remark}

Consequently, $\dim\widetilde{H}_X^p(M)=b_p(M)+b_{p-1}(M)$, where $b_p(M)$ is the $p$-th Betti number of $M$. In particular, the dimension of $\widetilde{H}_X^p(M)$ is a topological invariant of $M$, for all $p$. Also, by applying the Poincar\'{e} duality for the de Rham cohomology $H^{\bullet}_{dR}(M)$ in \eqref{IIx} we obtain the following Poincar\'{e} duality for $\widetilde{d}_X$-cohomology:
\begin{equation}
\widetilde{H}_X^p(M)\cong \left(\widetilde{H}^{n+1-p}_{c,X}(M)\right)^*,\,p=0,\ldots,n+1,
\label{k1}
\end{equation}
where the index $c$ denotes \textit{compactly supported}.

Indeed, taking into account the Poincar\'{e} duality $H_{dR}^p(M)\cong \left(H_{c,dR}^{n-p}(M)\right)^*$ and the classical isomorphism $(V\oplus W)^*\cong V^*\oplus W^*$ then we easily obtain \eqref{k1}.

Also, we have
\begin{proposition}
$\widetilde{H}_X^p(M)=\widetilde{0}$ for any $p>n+1$.
\end{proposition}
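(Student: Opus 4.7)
The plan is to observe that this is an immediate consequence of either the isomorphism \eqref{IIx} or, even more directly, of the definition of the module of $1$-differentiable $p$-forms.

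The direct route is to recall that on the $n$-dimensional manifold $M$ we have $\Omega^k(M)=0$ for every $k>n$. Thus, as soon as $p>n+1$, both inequalities $p>n$ and $p-1>n$ hold, so
\begin{displaymath}
\widetilde{\Omega}^p(M)=\Omega^p(M)\oplus\Omega^{p-1}(M)=0\oplus 0=\widetilde{0}.
\end{displaymath}
Since the space of $p$-cochains itself is trivial, so is any quotient of it, hence $\widetilde{H}_X^p(M)=\widetilde{0}$.

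Alternatively, I would invoke the previously established isomorphism $\widetilde{H}^{\bullet}_X(M)\cong H^{\bullet}_{dR}(M)\oplus H^{\bullet-1}_{dR}(M)$ from \eqref{IIx}, together with the standard fact that $H^k_{dR}(M)=0$ whenever $k>n$. For $p>n+1$ both degrees $p$ and $p-1$ exceed $n$, so each summand on the right-hand side vanishes.

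There is essentially no obstacle here; the only point worth noting is that the bound $p>n+1$ (rather than $p>n$) is sharp, because the top degree $p=n+1$ still receives the (possibly nonzero) contribution $H^n_{dR}(M)$ via the second summand of \eqref{IIx}. I would therefore include a brief remark that the cohomology can remain nontrivial in the exceptional degree $p=n+1$.
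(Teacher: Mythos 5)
Your proof is correct and matches the paper's (implicit) justification: the proposition is stated there without proof, immediately after the isomorphism \eqref{IIx} and the formula $\dim\widetilde{H}_X^p(M)=b_p(M)+b_{p-1}(M)$, both of which give the claim since $b_p(M)=b_{p-1}(M)=0$ for $p>n+1$. Your direct observation that $\widetilde{\Omega}^p(M)=\Omega^p(M)\oplus\Omega^{p-1}(M)=\widetilde{0}$ already at the cochain level is even more elementary, and your remark that degree $p=n+1$ can survive via $H^{n}_{dR}(M)$ is consistent with the range $p=0,\ldots,n+1$ appearing in the Poincar\'{e} duality \eqref{k1}.
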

\begin{proposition}
For any vector fields $X,Y\in\mathcal{X}(M)$ there is an isomorphism:
\begin{equation}
\label{kI1}
\alpha_{X,Y}:\widetilde{H}^{\bullet}_X(M)\rightarrow\widetilde{H}^{\bullet}_Y(M)\,,\,\alpha_{X,Y}([(\varphi,\psi)])=[(\varphi,\imath_{Y-X}\varphi+\psi)]
\end{equation}
with the inverse $\alpha_{X,Y}^{-1}=\alpha_{Y,X}$.
\end{proposition}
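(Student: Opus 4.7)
The plan is to verify that $\alpha_{X,Y}$ already makes sense at the cochain level as a map $\widetilde{\Omega}^p(M)\to\widetilde{\Omega}^p(M)$ by $(\varphi,\psi)\mapsto(\varphi,\imath_{Y-X}\varphi+\psi)$, to show that it is a cochain map $(\widetilde{\Omega}^\bullet(M),\widetilde{d}_X)\to(\widetilde{\Omega}^\bullet(M),\widetilde{d}_Y)$, and then to observe that at the cochain level $\alpha_{Y,X}\circ\alpha_{X,Y}=\mathrm{id}$, whence the induced maps in cohomology are mutual inverses.

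First I would compute both sides of $\widetilde{d}_Y\circ\alpha_{X,Y}=\alpha_{X,Y}\circ\widetilde{d}_X$ directly from the definitions \eqref{II1}. On the left,
\begin{displaymath}
\widetilde{d}_Y(\varphi,\imath_{Y-X}\varphi+\psi)=(d\varphi,\mathcal{L}_Y\varphi-d\imath_{Y-X}\varphi-d\psi),
\end{displaymath}
while on the right
\begin{displaymath}
\alpha_{X,Y}(d\varphi,\mathcal{L}_X\varphi-d\psi)=(d\varphi,\imath_{Y-X}(d\varphi)+\mathcal{L}_X\varphi-d\psi).
\end{displaymath}
The first components coincide and the identity of the second components reduces to
\begin{displaymath}
\mathcal{L}_Y\varphi-\mathcal{L}_X\varphi=d\imath_{Y-X}\varphi+\imath_{Y-X}d\varphi,
\end{displaymath}
which is exactly the Cartan magic formula $\mathcal{L}_{Y-X}=d\imath_{Y-X}+\imath_{Y-X}d$ combined with the linearity of the Lie derivative in the vector-field slot. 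This is the only nontrivial check, and I expect it to be the main (indeed the only real) obstacle; once the Cartan formula is invoked it is immediate.

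Next I would verify that $\alpha_{X,Y}$ and $\alpha_{Y,X}$ are mutual inverses already at the level of cochains, without passing to cohomology: applying the formula twice gives
\begin{displaymath}
\alpha_{Y,X}\bigl(\alpha_{X,Y}(\varphi,\psi)\bigr)=\alpha_{Y,X}(\varphi,\imath_{Y-X}\varphi+\psi)=(\varphi,\imath_{X-Y}\varphi+\imath_{Y-X}\varphi+\psi)=(\varphi,\psi),
\end{displaymath}
and symmetrically for the other composition. Since each $\alpha_{X,Y}$ is a cochain map by the preceding step, the induced maps in cohomology are well defined and satisfy $\alpha_{Y,X}\circ\alpha_{X,Y}=\mathrm{id}$ on $\widetilde{H}^\bullet_X(M)$ and $\alpha_{X,Y}\circ\alpha_{Y,X}=\mathrm{id}$ on $\widetilde{H}^\bullet_Y(M)$, establishing the claimed isomorphism.

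As a sanity check, the result is also consistent with the isomorphism \eqref{IIx}: under the identifications $\widetilde H_X^\bullet(M)\cong H_{dR}^\bullet(M)\oplus H_{dR}^{\bullet-1}(M)$ and $\widetilde H_Y^\bullet(M)\cong H_{dR}^\bullet(M)\oplus H_{dR}^{\bullet-1}(M)$ coming from $[(\varphi,\psi)]\mapsto([\varphi],[\imath_X\varphi-\psi])$ and $[(\varphi,\psi)]\mapsto([\varphi],[\imath_Y\varphi-\psi])$ respectively, the map $\alpha_{X,Y}$ corresponds to the identity, which is a useful consistency check but not needed for the proof proper.
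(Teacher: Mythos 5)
Your proof is correct and complete. The paper actually states this proposition without any proof at all, so there is nothing to compare against directly; your cochain-level verification is the natural argument and fills the gap properly. The two essential points --- that the chain-map identity reduces via linearity of $\mathcal{L}$ and $\imath$ in the vector-field slot to Cartan's formula $\mathcal{L}_{Y-X}=d\imath_{Y-X}+\imath_{Y-X}d$, and that $\alpha_{X,Y}$ and $\alpha_{Y,X}$ are already inverse at the level of forms --- are both checked correctly. Your closing remark that $\alpha_{X,Y}$ corresponds to the identity under the two instances of the isomorphism \eqref{IIx} is in fact the route the paper's surrounding text implicitly suggests (the proposition follows the discussion of \eqref{IIx}), so you have effectively given both the direct proof and the one the authors likely had in mind.
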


\subsubsection{Examples}

\begin{example}Let $(M,\omega)$ be a symplectic manifold, that is $\omega\in\Omega^2(M)$ is closed and nondegenerated. We recall that a vector field $X\in\mathcal{X}(M,\omega)$ is called \textit{symplectic} if $\imath_X\omega$ is closed and it is called \textit{hamiltonian} if $\imath_X\omega$ is exact. Taking into account that for every symplectic vector field $X\in\mathcal{X}(M,\omega)$ we have $\mathcal{L}_X\omega=0$ it follows that $\widetilde{d}_X(\omega,\theta)=(0,0)$ for every symplectic vector field $X\in\mathcal{X}(M,\omega)$ and every  closed $1$-form $\theta$ on $M$.
\end{example}
Moreover we have the following result:
\begin{proposition}
If $[(\omega,\theta)]=(0,0)$ and $\theta$ is exact then $\omega$ is exact and $X$ is hamiltonian.
\end{proposition}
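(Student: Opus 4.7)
The plan is to unpack the hypothesis $[(\omega,\theta)] = (0,0)$ directly via the definition of $\widetilde{d}_X$. By assumption there exists $(\varphi,\psi)\in\widetilde{\Omega}^1(M)$ (so $\varphi$ is a $1$-form and $\psi$ is a function) with $\widetilde{d}_X(\varphi,\psi) = (\omega,\theta)$; writing this out in components using \eqref{II1} gives the two equations $d\varphi = \omega$ and $\mathcal{L}_X\varphi - d\psi = \theta$.

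The first equation immediately shows $\omega$ is exact, giving the first conclusion at no cost. For the second conclusion, I would apply the classical Cartan formula $\mathcal{L}_X\varphi = d\iota_X\varphi + \iota_Xd\varphi$ to the identity $\mathcal{L}_X\varphi - d\psi = \theta$, and substitute $d\varphi = \omega$ from the first equation. This rearranges to
\begin{equation*}
\iota_X\omega \;=\; \theta + d(\psi - \iota_X\varphi).
\end{equation*}
Since by hypothesis $\theta$ is exact, say $\theta = dh$ for some $h\in\mathcal{F}(M)$, the right-hand side becomes $d(h + \psi - \iota_X\varphi)$, which is exact. Hence $\iota_X\omega$ is exact and $X$ is hamiltonian by definition.

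There is no real obstacle here: the argument is a direct computation, and the only subtlety is correctly tracking the degrees of the components of an $1$-differentiable $2$-cochain (namely that $\psi$ is a function, so that $\iota_X\varphi$ and $\psi$ can legitimately be added and differentiated to produce the required exact $1$-form). No appeal to the symplectic structure of $\omega$ beyond the two displayed coboundary equations is needed, which is consistent with the fact that the statement is really about the structure of the complex $(\widetilde{\Omega}^\bullet(M),\widetilde{d}_X)$ rather than about symplectic geometry per se.
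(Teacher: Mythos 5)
Your proof is correct and follows essentially the same route as the paper: unpack $(\omega,\theta)=\widetilde{d}_X(\varphi,\psi)$ into the two component equations, read off exactness of $\omega$ from the first, and combine the Cartan formula with the substitution $d\varphi=\omega$ in the second to get $\imath_X\omega=d(h+\psi-\imath_X\varphi)$. The only (immaterial) difference is that you invoke the exactness of $\theta$ at the end rather than writing $\theta=dg$ at the outset.
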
 
\begin{proof}
Let $\theta=dg$, where $g\in C^\infty(M,\omega)$. Then $[(\omega,dg)]=(0,0)$ if there exists $(\varphi,f)\in\widetilde{\Omega}^1(M)$ such that $(\omega,dg)=\widetilde{d}_X(\varphi,f)$ which implies
\begin{displaymath}
\omega=d\varphi\,\,\,{\rm and}\,\,\,\mathcal{L}_X\varphi-df=dg.
\end{displaymath}
The first relation from above says that $\omega$ is exact and replacing $\omega=d\varphi$ in the second relation we obtain $\imath_X\omega=d(f+g-\imath_X\varphi)$, were we have used the Cartan identity $\mathcal{L}_X=d\imath_X+\imath_Xd$. Thus, $\imath_X\omega$ is exact i.e., $X$ is hamiltonian.
\end{proof}
\begin{example}
Let $(M,\omega)$ be a symplectic manifold and $X\in\mathcal{X}(M,\omega)$ be a symplectic vector field. Then $(\imath_X\omega,k)$ is $\widetilde{d}_X$-closed for every $k\in\mathbb{R}$. Moreover if $k\neq0$ then $[(\imath_X\omega,k)]\neq(0,0)$. 

Indeed, if we suppose that $[(\imath_X\omega,k)]=(0,0)$ then there exists $(f,0)\in\widetilde{\Omega}^0(M)$ such that $(\imath_X\omega,k)=\widetilde{d}_X(f,0)$ which implies
\begin{displaymath}
df=\imath_X\omega\,\,\,{\rm and}\,\,\, \mathcal{L}_Xf=k.
\end{displaymath}
The first relation from above says that $X$ is hamiltonian and if $H$ is the hamiltonian function of $X$, that is $\imath_X\omega=-dH$, then we have $d(f+H)=0$ which implies $f=-H+c$, where $c\in\mathbb{R}$. Now, replacing $f=-H+c$ in the second relation and taking into account that $\mathcal{L}_XH=0$ we obtain $\mathcal{L}_Xf=0\neq k$. 
\end{example}
\begin{example}
Suppose now that $(M, \omega )$ is a symplectic manifold with $\omega \in \Omega ^2(M)$ an exact symplectic form i.e. there exists an $1$-form $\theta $ (sometimes called \textit{Liouville $1$-form}) such that: $\omega =d\theta $. Then there exists a {\it potential symplectic vector field} $\xi $ (sometimes called \textit{Liouville vector field}) i.e. a vector field for which: $\theta =\imath_{\xi }\omega $, conform \cite[p. 22]{c:ds}. Then, we have $\widetilde{d}_{\xi }(\omega , \theta )=(0, {\mathcal L}_{\xi }\omega -\omega )$. But ${\mathcal L}_{\xi }\omega =d\imath_{\xi }\omega +\imath_{\xi }d\omega =d\theta =\omega$
and then $\widetilde{d}_{\xi }(\omega , \theta )=(0, 0)$ which means that $[(\omega , \theta )]\in \widetilde{H}_{\xi }^{2}(M)$. The $\widetilde{d}_{\xi}$-cohomology class $[(\omega,\theta)]$ from this example will be called the \textit{$1$-differentiable cohomology class induced by the Liouville vector field $\xi$ of the exact symplectic manifold $(M,\omega)$}.
\end{example}

We have
\begin{proposition}
The $\widetilde{d}_{\xi}$-cohomology class $[(\omega,\theta)]$ induced by the Liouville vector field $\xi$ of the exact symplectic manifold $(M,\omega)$ vanish.
\end{proposition}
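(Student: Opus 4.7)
The plan is to exhibit an explicit primitive $(\varphi,f)\in\widetilde{\Omega}^1(M)$ with $\widetilde{d}_\xi(\varphi,f)=(\omega,\theta)$. By the definition \eqref{II1}, this amounts to finding $\varphi\in\Omega^1(M)$ and $f\in\Omega^0(M)$ such that $d\varphi=\omega$ and $\mathcal{L}_\xi\varphi-df=\theta$.

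The first equation suggests the natural choice $\varphi=\theta$, which works immediately since by hypothesis $d\theta=\omega$. It then remains to choose $f$ so that $df=\mathcal{L}_\xi\theta-\theta$. For this I would use the Cartan magic formula together with the defining relation $\theta=\imath_\xi\omega$ of the Liouville vector field: compute $\mathcal{L}_\xi\theta=d\imath_\xi\theta+\imath_\xi d\theta=d(\imath_\xi\theta)+\imath_\xi\omega=d(\imath_\xi\theta)+\theta$, so that $\mathcal{L}_\xi\theta-\theta=d(\imath_\xi\theta)$ is exact. Hence the choice $f=\imath_\xi\theta$ fulfills the second equation.

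I would then verify directly that $(\varphi,f)=(\theta,\imath_\xi\theta)$ satisfies $\widetilde{d}_\xi(\theta,\imath_\xi\theta)=(d\theta,\mathcal{L}_\xi\theta-d\imath_\xi\theta)=(\omega,\imath_\xi d\theta)=(\omega,\imath_\xi\omega)=(\omega,\theta)$, which shows that $(\omega,\theta)$ is $\widetilde{d}_\xi$-exact and therefore $[(\omega,\theta)]=(0,0)$ in $\widetilde{H}_\xi^2(M)$. There is no real obstacle here: the whole argument reduces to noticing that the Liouville identity $\theta=\imath_\xi\omega$ is exactly what converts the Cartan formula for $\mathcal{L}_\xi\theta$ into the exactness statement needed to produce the potential $f$.
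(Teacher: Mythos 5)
Your proposal is correct and follows essentially the same route as the paper: both exhibit an explicit $\widetilde{d}_\xi$-primitive of $(\omega,\theta)$ built from the Liouville form $\theta$ (the paper uses $(\theta+df,\xi f)$ for an arbitrary $f\in C^\infty(M)$, relying on $\mathcal{L}_\xi\theta=\theta$). Note only that $\imath_\xi\theta=\imath_\xi\imath_\xi\omega=0$, so your potential $f=\imath_\xi\theta$ vanishes identically and your primitive $(\theta,0)$ is exactly the paper's choice with $f=0$.
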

\begin{proof}
Let $f\in C^{\infty}(M)$. Taking into acount $d\theta=\omega$, $\mathcal{L}_{\xi}\theta=\theta$ and $\mathcal{L}_{\xi}df=d(\xi f)$, then it follows that $(\omega,\theta)=\widetilde{d}_{\xi}(\theta+df, \xi f)$.
\end{proof}
\begin{remark}
The vanishing of the $\widetilde{d}_{\xi}$-cohomology class $[(\omega,\theta)]$ it follows also from the isomorphism \eqref{IIx}, since $[(\omega,\theta)]\mapsto([\omega], [\imath_{\xi}\omega-\theta])=(0,0)$.
\end{remark}

\begin{example}
(Hamilton type equations.) Let $\widetilde{\omega}=(\omega ,\theta)\in \widetilde{\Omega}^{2}\left( M\right) $ such that $\widetilde{d}_{X}\widetilde{\omega}=(0,0)$ and $\omega \in \Omega ^{2}\left( M\right) $ has a maximal rank. Since $d\omega =0$ and $d\imath_{X}\omega =d\theta $, it follows that $\omega $ is a symplectic form and we can consider (at least locally) $\theta $ having the form $\theta =\imath_{X}\omega +dh$, where $h\in \Omega ^{0}\left( M\right) =C^\infty(M)$. Also, for $\widetilde{\Theta}=\left( f,0\right) \in \widetilde{\Omega}^{0}\left( M\right) $ we have $\widetilde{d}_{X}\widetilde{\Theta}=\left( df,Xf\right) $.
The equation $-\widetilde{d}_{X}\widetilde{\Theta}=\imath_{X}\widetilde{\omega}$, i.e. $-\left(df,Xf\right) =(\imath_{X}\omega ,-\imath_{X}dh)$ reads $\imath_{X}\omega =-df$ and $Xh =X f$.

It follows that given $\widetilde{\omega}=(\omega ,\theta )$ which is $\widetilde{d}_{X}$-closed, $\omega $ of maximal rank, and $\widetilde{\Theta}=(f,0)\in \widetilde{\Omega}^{0}\left( M\right) $, then the Hamilton type equation $-\widetilde{d}_{X}\widetilde{\Theta}=\imath_{X}\widetilde{\omega}$ has as solution the $\omega$-hamiltonian vector
field $X$ corresponding to $f$, provided that $\theta $ is an exact form, i.e. $\theta =dh^{\prime }$, $h^{\prime }\in C^\infty(M)$, and $Xh^{\prime } =0$. In the particular case when $\theta =0$, then $X$ is simply the hamiltonian vector field corresponding to $f$.
\end{example}

\subsection{The complex case}

The main purpose of this subsection is to formulate a cohomology induced by a holomorphic vector field on complex manifolds and possible relation with classical Dolbeault cohomology. 

Let $M$ be a complex $n$-dimensional manifold and $X$  a holomorphic vector field on $M$. Denote by $\Omega^{p,q}(M)$ the space of all complex differential forms on $M$ of type $(p,q)$, briefly $(p,q)$-forms on $M$. Then we have a decomposition of the exterior derivative $d=\partial+\overline{\partial}$, where $\partial:\Omega^{p,q}(M)\rightarrow\Omega^{p+1,q}(M)$ and $\overline{\partial}:\Omega^{p,q}(M)\rightarrow\Omega^{p,q+1}(M)$. Similarly to previous notations we consider the set $\widetilde{\Omega}^{p,q}(M)=\Omega^{p,q}(M)\oplus\Omega^{p,q-1}(M)$. Since for every holomorphic vector field $X$ on $M$ the Lie derivative $\mathcal{L}_X$ preserves the $(p,q)$ type of forms on which this act, we define the following operator:
\begin{equation}
\overline{\partial}_X:\widetilde{\Omega}^{p,q}(M)\rightarrow \widetilde{\Omega}^{p,q+1}(M)\,,\,\overline{\partial}_X(\varphi,\psi)=(\overline{\partial}\varphi, \mathcal{L}_X\varphi-\overline{\partial}\psi),\,\,\forall\,(\varphi,\psi)\in\widetilde{\Omega}^{p,q}(M).
\label{c1}
\end{equation}
Also, taking the same components in relation $\mathcal{L}_X (\partial+\overline{\partial})=(\partial+\overline{\partial})\mathcal{L}_X$ we get $[\mathcal{L}_X,\overline{\partial}]=0$.

It is easy to see that $\overline{\partial}_X^2=(0,0)$. Then for any $p\geq0$ we have the differential complex $(\widetilde{\Omega}^{p,\bullet},\overline{\partial}_X)$ and denote by $\widetilde{H}_X^{p,\bullet}(M)$ the cohomology groups of this complex, called  \textit{$p$-th Dolbeault $1$-differentiable cohomology induced by the holomorphic vector field $X$} of $M$.

We notice that the exterior product from \eqref{II2} can be extended to $\widetilde{\Omega}^{p,q}(M)$ and this induces, on the space of $\overline{\partial}_X$-cohomology classes, a cohomology algebra structure. Also, if we consider a biholomorphic map $f:M^{\prime}\rightarrow M$ between two complex manifolds, taking into account that for any holomorphic vector field $X$ on $M^{\prime}$, $f_*X$ is a holomorphic vector field on $M$ and $f^*$ preserves the $(p,q)$-forms, then the map $\widetilde{f}^*$ from \eqref{x01} induces the following map in the cohomology level:
\begin{equation}
\widetilde{f}^{\bullet}:\widetilde{H}_{f_*X}^{p,\bullet}(M)\rightarrow \widetilde{H}_{X}^{p,\bullet}(M^{\prime})\,,\,\widetilde{f}^{\bullet}[(\varphi , \psi )]=[\widetilde{f}^*(\varphi , \psi )]
\label{c2}
\end{equation}
for any holomorphic vector field $X$ on $M^{\prime}$.

In the following we are interested to find an isomorphism of type \eqref{IIx} for Dolbeault $1$-differentiable cohomology induced by the holomorphic vector field $X$ of $M$ and we have

\begin{theorem}
\label{t2.1}
If $(M,g)$ is a compact K\"{a}hler manifold then we have the following isomorphism:
\begin{equation}
\widetilde{H}^{p,\bullet}_X(M)\cong H^{p,\bullet}(M)\oplus H^{p,\bullet-1}(M),
\label{c6}
\end{equation}
where $H^{p,\bullet}(M)$ denotes the classical $p$-th Dolbeault cohomology of $M$.
\end{theorem}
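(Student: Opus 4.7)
The plan is to establish \eqref{c6} by a long exact sequence argument, since the direct mimicking of the real-case isomorphism \eqref{IIx} fails: for a holomorphic $X$ the contraction $\imath_X$ sends $\Omega^{p,q}(M)$ into $\Omega^{p-1,q}(M)$, not into $\Omega^{p,q-1}(M)$, so the explicit formula $[(\varphi,\psi)]\mapsto([\varphi],[\imath_X\varphi-\psi])$ does not respect types in the Dolbeault setting. Instead I will exploit the natural short exact sequence of complexes and show that the connecting map vanishes by Hodge theory.

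The first step is to write the short exact sequence of chain complexes
\begin{displaymath}
0\to(\Omega^{p,\bullet-1}(M),-\overline{\partial})\to(\widetilde{\Omega}^{p,\bullet}(M),\overline{\partial}_X)\to(\Omega^{p,\bullet}(M),\overline{\partial})\to 0,
\end{displaymath}
where the injection is $\psi\mapsto(0,\psi)$ and the surjection is $(\varphi,\psi)\mapsto\varphi$; both maps commute with the differentials by inspection of \eqref{c1}. Passing to the associated long exact cohomology sequence, a routine snake-lemma computation $\varphi\mapsto(\varphi,0)\mapsto\overline{\partial}_X(\varphi,0)=(0,\mathcal{L}_X\varphi)$ identifies the connecting homomorphism $\delta:H^{p,q}(M)\to H^{p,q}(M)$ with the action induced on Dolbeault cohomology by $\mathcal{L}_X$.

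The crux is to prove $\delta=0$, and this is precisely where the compact K\"ahler hypothesis is used. Given $[\varphi]\in H^{p,q}(M)$, represent it by its harmonic form $\varphi_H$. Because $M$ is compact K\"ahler one has the identity $\Delta_d=2\Delta_{\overline{\partial}}$, so $\varphi_H$ is also $d$-harmonic and in particular $d$-closed. Cartan's formula then gives $\mathcal{L}_X\varphi_H=d\,\imath_X\varphi_H$, which is $d$-exact. Since $X$ is holomorphic, $\mathcal{L}_X\varphi_H$ is of pure type $(p,q)$, and since $[\mathcal{L}_X,\overline{\partial}]=0$ it is $\overline{\partial}$-closed. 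The $\partial\overline{\partial}$-lemma on a compact K\"ahler manifold, applied to this $d$-exact, $\overline{\partial}$-closed form of pure type, produces a $\sigma\in\Omega^{p-1,q-1}(M)$ with $\mathcal{L}_X\varphi_H=\partial\overline{\partial}\sigma=\overline{\partial}(-\partial\sigma)$. Hence $[\mathcal{L}_X\varphi_H]=0$ in $H^{p,q}(M)$ and $\delta=0$.

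Once $\delta$ vanishes, the long exact sequence collapses into short exact sequences
\begin{displaymath}
0\to H^{p,q-1}(M)\to\widetilde{H}^{p,q}_X(M)\to H^{p,q}(M)\to 0,
\end{displaymath}
which split as sequences of finite-dimensional vector spaces, delivering \eqref{c6}. The principal obstacle is the vanishing of $\delta$: without compactness there are no harmonic representatives, and without the K\"ahler condition one loses both the identification $\Delta_d=2\Delta_{\overline{\partial}}$ (so that harmonic forms are $d$-closed) and the $\partial\overline{\partial}$-lemma, each of which is essential for passing from the $d$-exactness of $\mathcal{L}_X\varphi_H$ to its $\overline{\partial}$-exactness.
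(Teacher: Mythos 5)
Your proof is correct, and it reaches \eqref{c6} by a route that differs from the paper's in its second half while sharing the same crux. Both arguments hinge on the fact that a holomorphic $X$ acts trivially on the Dolbeault cohomology of a compact K\"ahler manifold: the paper takes a representative $\varphi$ that is simultaneously $d$- and $\overline{\partial}$-closed, observes that $\mathcal{L}_X\varphi=\partial(\imath_X\varphi)$ with $\overline{\partial}(\imath_X\varphi)=0$, and corrects $\imath_X\varphi$ by a $\overline{\partial}$-exact term so as to make it $d$-closed, obtaining $\mathcal{L}_X\varphi=\overline{\partial}\partial\varphi^{\prime}$ --- in effect a hands-on instance of the $\partial\overline{\partial}$-lemma that you invoke wholesale. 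Where you genuinely diverge is in how the isomorphism is then extracted: the paper writes explicit mutually inverse maps $[(\varphi,\psi)]\mapsto([\varphi],[\partial\varphi^{\prime}-\psi])$ and $([\varphi],[\psi])\mapsto[(\varphi,\partial\varphi^{\prime}-\psi)]$, mimicking \eqref{IIx} with $\imath_X\varphi$ replaced by the $\overline{\partial}$-primitive $\partial\varphi^{\prime}$ of $\mathcal{L}_X\varphi$, whereas you run the long exact sequence of $0\to(\Omega^{p,\bullet-1}(M),-\overline{\partial})\to(\widetilde{\Omega}^{p,\bullet}(M),\overline{\partial}_X)\to(\Omega^{p,\bullet}(M),\overline{\partial})\to 0$, identify the connecting homomorphism with the induced $\mathcal{L}_X$-action, and split the resulting short exact sequences of vector spaces (this is also how the paper itself later handles the relative version in Proposition \ref{p2.1} and Corollary \ref{c2.2}). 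Your packaging buys cleanliness: it sidesteps the verification that the paper's explicit map is independent of the choice of $\varphi^{\prime}$, a point the paper leaves implicit, at the price of an isomorphism with no closed formula on representatives. Your opening observation --- that the real-case formula $[(\varphi,\psi)]\mapsto([\varphi],[\imath_X\varphi-\psi])$ cannot be transplanted because $\imath_X\varphi$ lands in $\Omega^{p-1,q}(M)$ rather than $\Omega^{p,q-1}(M)$ --- is precisely the reason the paper must introduce $\varphi^{\prime}$, and it is a useful point to have made explicit.
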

\begin{proof}
Using the Hodge theory on compact K\"{a}hler manifolds we obtain that any Dolbeault class in $H^{p,q}(M)$ is represented by a form $\varphi\in\Omega^{p,q}(M)$ that is closed both with respect to $d$ and $\overline{\partial}$, see \cite{Ko, Mo-Ko}. Thus, for any Dolbeault class $[\varphi]\in H^{p,q}(M)$ we have $\mathcal{L}_{X}\varphi=d(\imath_{X}\varphi)$ is again a $(p,q)$-form on $M$. Since $X$ is holomorphic we have $\imath_{X}\varphi\in\Omega^{p-1,q}(M)$ and so
\begin{displaymath}
\mathcal{L}_{X}\varphi=\partial(\imath_{X}\varphi)\,\,{\rm and}\,\,\overline{\partial}(\imath_{X}\varphi)=0
\end{displaymath}
for degree reasons. As $\overline{\partial}(\imath_{X}\varphi)=0$  we find a $(p-1,q-1)$-form $\varphi^{'}$ on $M$ such that $\imath_{X}\varphi+\overline{\partial}\varphi^{'}$ is $\overline{\partial}$-closed which implies according to our hypothesis that $\imath_{X}\varphi+\overline{\partial}\varphi^{'}$ is $d$-closed. Hence
\begin{displaymath}
\partial(\imath_{X}\varphi)=-\partial\overline{\partial}\varphi^{'}=\overline{\partial}\partial\varphi^{'}
\end{displaymath}
and so
\begin{equation}
\mathcal{L}_{X}\varphi=\overline{\partial}\partial\varphi^{'},
\label{c7}
\end{equation}
which shows that the induced action on Dolbeault cohomology $H^{p,q}(M)$ is trivial. Then the map
\begin{displaymath}
\widetilde{H}^{p,\bullet}_X(M)\mapsto H^{p,\bullet}(M)\oplus H^{p,\bullet-1}(M)\,,\,[(\varphi,\psi)]\mapsto  ([\varphi],[\partial\varphi^\prime-\psi])
\end{displaymath}
with the inverse
\begin{displaymath}
H^{p,\bullet}(M)\oplus H^{p,\bullet-1}(M)\mapsto \widetilde{H}^{p,\bullet}_X(M)\,,\,([\varphi],[\psi])\mapsto[(\varphi,\partial\varphi^\prime-\psi)]
\end{displaymath}
establishes the isomorphism from \eqref{c6}.
\end{proof}

\begin{remark}
Using the Serre duality for Dolbeault cohomology on compact K\"{a}hler manifolds we obtain the following Serre duality for $\overline{\partial}_X$-cohomology:
\begin{equation}\widetilde{H}_X^{p,q}(M)\cong \widetilde{H}_X^{n-p,n+1-q}(M).
\label{c9}
\end{equation}
\end{remark}

\section{A relative cohomology induced by a vector field}
\setcounter{equation}{0}

In this section we present a generalization of the $\widetilde{d}_X$-cohomology relative to a smooth map $f:M^\prime\rightarrow M$ between two manifolds. 

As in the case of relative cohomology, we consider the set $\Omega^p(f)=\Omega^p(M)\oplus\Omega^{p-1}(M^{\prime})$ and the operator $d_{X,f}$ defined by
\begin{equation}
d_{X,f}:\Omega^p(f)\rightarrow\Omega^{p+1}(f)\,,\,d_{X,f}(\varphi,\psi)=(d\varphi,\mathcal{L}_{X}f^*\varphi-d\psi)\,,\,\,X\in\mathcal{X}(M^\prime).
\label{a1}
\end{equation} 
An easy calculation shows that $d_{X,f}^2=(0,0)$. Denote the cohomology groups of the complex $(\Omega^{\bullet}(f),d_{X,f})$ by $H^{\bullet}_{X}(f)$ and we call it the \textit{relative cohomology induced by the vector field $X$}.
\begin{remark}
If $M^\prime=M$ and $f=Id|_M$ then $H^{\bullet}_{X}(Id|_M)=\widetilde{H}_X^\bullet(M)$.
\end{remark}

As in formula \eqref{II2}, we can define a {\it pair wedge product} adapted to our study by
\begin{equation}
\wedge_f:\Omega ^p(f)\times \Omega ^{p^{\prime}}(f)\rightarrow \Omega ^{p+p^{\prime}}(f)\,,\,\,(\varphi,\psi)\wedge_f(\varphi^{\prime},\psi^{\prime})=(\varphi\wedge\varphi^{\prime},(-1)^pf^*\varphi\wedge\psi^{\prime}+\psi\wedge f^*\varphi^{\prime}),
\label{xy1}
\end{equation}
where $(\varphi,\psi)\in\Omega^p(f)$ and $(\varphi^{\prime},\psi^{\prime})\in\Omega^{p^{\prime}}(f)$, respectively.

One easily verifies that this product is anticomutative, namely
\begin{displaymath}
(\varphi,\psi)\wedge_f(\varphi^{\prime},\psi^{\prime})=(-1)^{p p^{\prime}}(\varphi^{\prime},\psi^{\prime})\wedge_f(\varphi,\psi)
\end{displaymath}
and also $(\Omega^p(f),\wedge_f)$ is an exterior graded algebra. By direct calculus we have that $d_{X,f}$ is an antiderivation with respect to the exterior product from \eqref{xy1}. We have
\begin{proposition}
The exterior product from \eqref{xy1} induces, on the space of $d_{X,f}$-cohomology classes, a cohomology algebra structure.
\end{proposition}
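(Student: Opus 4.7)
The plan is to follow the standard argument that any graded antiderivation of square zero on a graded algebra descends to a product on its cohomology. The paragraph preceding the proposition already asserts that $d_{X,f}$ is an antiderivation with respect to $\wedge_f$; as a first step I would verify this Leibniz rule explicitly,
\begin{equation*}
d_{X,f}((\varphi,\psi)\wedge_f(\varphi^\prime,\psi^\prime))=d_{X,f}(\varphi,\psi)\wedge_f(\varphi^\prime,\psi^\prime)+(-1)^p(\varphi,\psi)\wedge_f d_{X,f}(\varphi^\prime,\psi^\prime),
\end{equation*}
for $(\varphi,\psi)\in\Omega^p(f)$ and $(\varphi^\prime,\psi^\prime)\in\Omega^{p^\prime}(f)$. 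In the first slot this is just the ordinary Leibniz rule for $d$; in the second slot one expands using $f^*(\varphi\wedge\varphi^\prime)=f^*\varphi\wedge f^*\varphi^\prime$ together with the derivation property of $\mathcal{L}_X$ on $f^*\varphi\wedge f^*\varphi^\prime$, the identity $[\mathcal{L}_X,d]=0$ and the intertwining $f^*d=df^*$.

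Granted the Leibniz rule, the argument is formal. First, closedness is preserved: if both $(\varphi,\psi)$ and $(\varphi^\prime,\psi^\prime)$ are $d_{X,f}$-closed, then both summands on the right of the Leibniz identity vanish, so $(\varphi,\psi)\wedge_f(\varphi^\prime,\psi^\prime)$ is $d_{X,f}$-closed. Second, exactness in either factor produces an exact product: if $(\varphi,\psi)=d_{X,f}(\alpha,\beta)$ and $(\varphi^\prime,\psi^\prime)$ is closed, the Leibniz identity collapses to
\begin{equation*}
(\varphi,\psi)\wedge_f(\varphi^\prime,\psi^\prime)=d_{X,f}\bigl((\alpha,\beta)\wedge_f(\varphi^\prime,\psi^\prime)\bigr),
\end{equation*}
and a symmetric calculation (picking up a sign from the graded commutativity of $\wedge_f$ recorded just after \eqref{xy1}) handles exactness in the second slot. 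Consequently the prescription $[(\varphi,\psi)]\wedge_f[(\varphi^\prime,\psi^\prime)]:=[(\varphi,\psi)\wedge_f(\varphi^\prime,\psi^\prime)]$ is a well-defined bilinear map $H^p_X(f)\times H^{p^\prime}_X(f)\rightarrow H^{p+p^\prime}_X(f)$, and the associativity and graded commutativity of $\wedge_f$ at the cochain level descend to give the claimed cohomology algebra structure.

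I do not expect any serious obstacle. The only slightly delicate point is the verification of the Leibniz rule for the second component of $d_{X,f}$, where one must keep careful track of the placement of $f^*$ and of the sign conventions in \eqref{xy1}; once that bookkeeping is done the descent to cohomology is purely formal and entirely parallel to the analogous statement for $(\widetilde{\Omega}^\bullet(M),\widetilde{d}_X,\widetilde{\wedge})$ treated earlier in the paper.
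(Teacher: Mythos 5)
Your proposal is correct and follows exactly the route the paper intends: it states just before the proposition that $d_{X,f}$ is an antiderivation with respect to $\wedge_f$ (by direct calculus) and leaves the formal descent to cohomology implicit, which is precisely the Leibniz-rule verification plus the standard well-definedness argument you give. Your sign bookkeeping checks out (the key point being that $d_{X,f}(\varphi,\psi)$ has degree $p+1$, so its wedge with $(\varphi^\prime,\psi^\prime)$ carries the sign $(-1)^{p+1}$ in the second slot, which cancels against the terms produced by $[\mathcal{L}_X,d]=0$ and $f^*d=df^*$), so there is nothing to add.
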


\begin{proposition}
Let $\varphi\in\Omega^p(M)$ be a closed $p$-form on $M$, i.e. $d\varphi=0$. Then for every vector field $X\in\mathcal{X}(M^\prime)$  we have $(\varphi,\imath_Xf^*\varphi)\in\Omega^p(f)$ is $d_{X,f}$-closed and, moreover if $\varphi$ is exact then the $d_{X,f}$-cohomology class $[(\varphi,\imath_Xf^*\varphi)]$  vanishes. 
\end{proposition}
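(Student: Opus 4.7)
The plan is to verify the two claims by direct calculation from the definition \eqref{a1}, using only the naturality identity $d \circ f^* = f^* \circ d$ and Cartan's magic formula $\mathcal{L}_X = d \imath_X + \imath_X d$ on $M'$.

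First I would check closedness. Applying \eqref{a1} gives
\begin{equation*}
d_{X,f}(\varphi, \imath_X f^*\varphi) = \bigl(d\varphi,\; \mathcal{L}_X f^*\varphi - d(\imath_X f^*\varphi)\bigr).
\end{equation*}
The first component vanishes since $\varphi$ is closed. For the second, I would use $d f^*\varphi = f^* d\varphi = 0$ and then Cartan's formula on $M'$:
\begin{equation*}
\mathcal{L}_X f^*\varphi = d\,\imath_X f^*\varphi + \imath_X\, d f^*\varphi = d\,\imath_X f^*\varphi,
\end{equation*}
so the second component is $0$ as well, giving $d_{X,f}(\varphi, \imath_X f^*\varphi) = (0,0)$.

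For the exactness claim, write $\varphi = d\alpha$ with $\alpha \in \Omega^{p-1}(M)$. I would propose the primitive $(\alpha, \imath_X f^*\alpha) \in \Omega^{p-1}(f)$ and compute
\begin{equation*}
d_{X,f}(\alpha, \imath_X f^*\alpha) = \bigl(d\alpha,\; \mathcal{L}_X f^*\alpha - d\,\imath_X f^*\alpha\bigr) = \bigl(\varphi,\; \imath_X\, d f^*\alpha\bigr),
\end{equation*}
where the last equality uses Cartan's formula again, this time to rewrite $\mathcal{L}_X f^*\alpha - d\,\imath_X f^*\alpha = \imath_X\, d f^*\alpha$. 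Since $df^*\alpha = f^*d\alpha = f^*\varphi$, the second component equals $\imath_X f^*\varphi$, so $(\varphi, \imath_X f^*\varphi) = d_{X,f}(\alpha, \imath_X f^*\alpha)$ is $d_{X,f}$-exact.

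Neither step presents a real obstacle; the argument is essentially a ``relative'' version of the fact already used in the paper that $\mathcal{L}_X\varphi = d\imath_X\varphi$ for a closed $\varphi$. The only mild subtlety is keeping the two manifolds straight: Cartan's identity and the contraction $\imath_X$ must be applied to $f^*\varphi$ and $f^*\alpha$ on $M'$, not to $\varphi$ and $\alpha$ on $M$ (indeed, $X$ lives on $M'$ and need not be $f$-related to any vector field on $M$). Once this is noted, commuting $d$ past $f^*$ closes both computations.
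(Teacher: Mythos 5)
Your proof is correct and follows essentially the same route as the paper's: both parts reduce to Cartan's formula applied to $f^*\varphi$ (resp.\ $f^*\alpha$) on $M'$ together with $d f^* = f^* d$, and your primitive $(\alpha,\imath_X f^*\alpha)$ is exactly the paper's choice (the paper merely adds an arbitrary $d\zeta$ to the second slot to note non-uniqueness). No gaps.
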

\begin{proof}
Taking into account $d\varphi=0$ and Cartan identity $\mathcal{L}_X=d\imath_X+\imath_Xd$,  we have
\begin{eqnarray*}
d_{X,f}(\varphi,\imath_Xf^*\psi)&=&(d\varphi,\mathcal{L}_{X}f^*\varphi-d\imath_Xf^*\varphi)=(0,d\imath_Xf^*\varphi-d\imath_Xf^*\varphi)=(0,0).
\end{eqnarray*}
Now, if $\varphi=d\psi$, by direct calculus  we have $d_{X,f}(\psi,\imath_Xf^*\psi+d\zeta)=(\varphi,\imath_Xf^*\varphi)$, for every $\zeta\in\Omega^{p-2}(M^\prime)$, which says that $[(\varphi,\imath_Xf^*\varphi)]\in H^p_X(f)$ vanishes.
\end{proof}

\begin{example}
Let $(M, \omega )$ and $(M^\prime,\omega^\prime)$ be two exact symplectic manifolds with $\theta,\xi$ and $\theta^\prime,\xi^\prime$ the corresponding  Liouville $1$-forms and vector fields, respectively. We also consider $f:(M^\prime,\omega^\prime)\rightarrow(M,\omega)$ a symplectomorphism between two exact symplectic manifolds, that is $\omega^\prime=f^*\omega$. Then $d_{\xi^\prime,f }(\omega , \theta^\prime )=(0, {\mathcal L}_{\xi^\prime }f^*\omega -d\theta^\prime)$. But ${\mathcal L}_{\xi^\prime}f^*\omega =\mathcal{L}_{\xi^\prime}\omega^\prime=d\theta^\prime$, which implies $d_{\xi^\prime,f }(\omega , \theta^\prime )=(0, 0)$, that is $[(\omega , \theta^\prime )]\in H_{\xi^\prime }^{2}(f)$, which is called the \textit{cohomology class induced by the Liouville vector field $\xi^\prime$ relative to the symplectomorphism $f:(M^\prime,\omega^\prime)\rightarrow(M,\omega)$ between two exact symplectic manifolds}. Moreover if $f^*\theta=\theta^\prime$ then $[(\omega , \theta^\prime )]$ vanishes.
\end{example}

As in the classical case of relative cohomology, \cite{B-T}, we  can consider the mappings $\alpha:\Omega^{p-1}(M^{\prime})\rightarrow\Omega^p(f)$, $\alpha(\psi)=(0,\psi)$ and $\beta:\Omega^p(f)\rightarrow\Omega^{p}(M)$, $\beta(\varphi,\psi)=\varphi$ for all $\varphi\in\Omega^p(M)$ and $\psi\in\Omega^{p-1}(M^{\prime})$, respectively. Then,  we have the following result which relates the cohomology  $H_{X}^{\bullet}(f)$ with the de Rham cohomology of $M$ and $M^{\prime}$, respectively.
\begin{proposition}
\label{p2.1}
Let $f:M^{\prime}\rightarrow M$ be a smooth map between two manifolds. Then:
\begin{enumerate}
\item[(i)] The mappings $\alpha$ and $\beta$ induce an exact sequence of complexes
\begin{displaymath}
0\longrightarrow(\Omega^{\bullet-1}(M^{\prime}), d)\stackrel{\alpha}{\longrightarrow}(\Omega^{\bullet}(f),d_{X,f})\stackrel{\beta}{\longrightarrow}(\Omega^{\bullet}(M),d)\longrightarrow0.
\end{displaymath}
\item[(ii)] This exact sequence induces a long exact cohomology sequence
\begin{equation}
\ldots\longrightarrow H^{p-1}_{dR}(M^{\prime})\stackrel{\alpha^*}{\longrightarrow}H_{X}^p(f)\stackrel{\beta^*}{\longrightarrow}H^{p}_{dR}(M)\stackrel{\delta^*_{p}}{\longrightarrow}H^{p}_{dR}(M^{\prime})\longrightarrow\ldots,
\label{3}
\end{equation}
where the connecting homomorphism $\delta^*_{p}$ is defined by
\begin{equation}
\delta^*_{p}([\varphi])=[\mathcal{L}_{X}f^*\varphi],\,\,{\rm for\,\,any}\,\,[\varphi]\in H^{p}_{dR}(M).
\label{4}
\end{equation}
\end{enumerate}
\end{proposition}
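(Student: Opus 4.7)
The plan is to prove part (i) by straightforward checking (that $\alpha,\beta$ are chain maps and that the sequence is pointwise short-exact), and then obtain part (ii) as the standard zig-zag/connecting-homomorphism construction applied to (i), with an explicit computation of $\delta^*_p$ via the usual recipe.

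For (i), I first fix the sign convention on the shifted complex $(\Omega^{\bullet-1}(M'),d')$, taking $d'=-d$ so that the degrees shift cleanly. Then $\alpha(d'\psi)=\alpha(-d\psi)=(0,-d\psi)$ and $d_{X,f}\alpha(\psi)=d_{X,f}(0,\psi)=(d\cdot 0,\mathcal{L}_Xf^*0-d\psi)=(0,-d\psi)$, so $\alpha$ is a chain map. For $\beta$, one computes $\beta\,d_{X,f}(\varphi,\psi)=d\varphi=d\beta(\varphi,\psi)$. Exactness is obvious from the definitions: $\alpha$ is injective because the second component determines $\psi$, $\beta$ is surjective because $\beta(\varphi,0)=\varphi$, and $\mathrm{im}\,\alpha=\{(0,\psi)\}=\ker\beta$.

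For (ii), the short exact sequence of complexes from (i) yields a long exact sequence in cohomology by the zig-zag lemma. The only nontrivial content is the explicit form of the connecting map. Following the standard recipe: given $[\varphi]\in H^p_{dR}(M)$, lift $\varphi$ through $\beta$ to $(\varphi,0)\in\Omega^p(f)$; apply $d_{X,f}$ to obtain
\begin{displaymath}
d_{X,f}(\varphi,0)=(d\varphi,\mathcal{L}_Xf^*\varphi)=(0,\mathcal{L}_Xf^*\varphi),
\end{displaymath}
which lies in $\mathrm{im}\,\alpha$; pull back through $\alpha$ to recover $\mathcal{L}_Xf^*\varphi\in\Omega^p(M')$. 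This element is $d$-closed (in fact $d$-exact: $\mathcal{L}_Xf^*\varphi=d\,\imath_Xf^*\varphi+\imath_Xd f^*\varphi=d\,\imath_Xf^*\varphi$ since $df^*\varphi=f^*d\varphi=0$), so it defines a class in $H^p_{dR}(M')$, and $\delta^*_p([\varphi])=[\mathcal{L}_Xf^*\varphi]$ as claimed.

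The main bookkeeping obstacle is the sign convention on the shifted complex; everything else is mechanical verification and the routine invocation of the zig-zag lemma. A small remark worth making at the end is that since $\mathcal{L}_Xf^*\varphi$ is in fact exact whenever $\varphi$ is closed, $\delta^*_p$ is actually the zero map, so the long exact sequence degenerates into split short exact sequences
\begin{displaymath}
0\longrightarrow H^{p-1}_{dR}(M')\longrightarrow H^p_X(f)\longrightarrow H^p_{dR}(M)\longrightarrow 0,
\end{displaymath}
which is consistent with the decomposition \eqref{IIx} proved earlier in the special case $M'=M$, $f=\mathrm{Id}$.
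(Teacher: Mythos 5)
Your proof is correct and follows exactly the standard argument the paper relies on (the paper states Proposition \ref{p2.1} without proof, treating it as the usual short-exact-sequence/zig-zag construction, and your computation of $\delta^*_p$ reproduces \eqref{4}). Your explicit fix of the sign on the shifted complex, taking the differential to be $-d$ so that $\alpha$ is genuinely a chain map, is a careful touch the paper glosses over, and your closing observation that $\delta^*_p=0$ is precisely how the paper then derives the isomorphism \eqref{IIxx}.
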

Since for any closed form $\varphi\in\Omega^p(M)$ we have $\mathcal{L}_{X}f^*\varphi=d(i_Xf^*\varphi)$ the connecting homomorphism $\delta^*_{p}=0$ and so for every $p\geq1$ we have an exact sequence:
\begin{displaymath}
0\longrightarrow H^{p-1}_{dR}(M^{\prime})\stackrel{\alpha^*}{\longrightarrow}H_{X}^p(f)\stackrel{\beta^*}{\longrightarrow}H^{p}_{dR}(M)\stackrel{\delta^*_{p}}{\longrightarrow}0
\end{displaymath}
which leads to the following isomorpism 
\begin{equation}
H^{\bullet}_X(f)\cong H^{\bullet}_{dR}(M)\oplus H^{\bullet-1}_{dR}(M^{\prime}).
\label{IIxx}
\end{equation}

In the following we present a link between our cohomology and a generalization of a classical $1$-differentiable  cohomology associated to an one form of Lichnerowicz type.

For a smooth map $f:M^{\prime}\rightarrow M$ we can  consider the set $\Omega^{\prime p}(f)=\Omega^p(M^{\prime})\oplus\Omega^{p-1}(M)$ and define the following operator
\begin{equation}
d_{\eta,f}:\Omega^{\prime p}(f)\rightarrow\Omega^{\prime p+1}(f)\,,\,d_{\eta,f}(\varphi,\psi)=(d\varphi-f^*(d\eta\wedge\psi),-d\psi).
\label{f12}
\end{equation}

An easy calculation leads to $d_{\eta,f}^2=(0,0)$. We denote the cohomology groups of the complex $(\Omega^{\prime\bullet}(f),d_{\eta,f})$ by $H^{\bullet}_{\eta}(f)$ and we call it the \textit{relative $1$-differentiable cohomology associated to the $1$-form $\eta$}.  We also notice that if $f$ is diffeomorphism then $\Omega^{\prime\bullet}(f)=\Omega^{\bullet}(f^{-1})$.

As in the case of $d_{X,f}$-cohomology, we  can consider the mappings $\mu:\Omega^{p}(M^{\prime})\rightarrow\Omega^{\prime p}(f)$, $\mu(\varphi)=(\varphi,0)$ and $\nu:\Omega^{\prime p}(f)\rightarrow\Omega^{p-1}(M)$, $\nu(\varphi,\psi)=\psi$ for all $\varphi\in\Omega^p(M^{\prime})$ and $\psi\in\Omega^{p-1}(M)$, respectively. Then, we have the following result which relates $H_{\eta}^{\bullet}(f)$ with the de Rham cohomologies of $M$ and $M^{\prime}$, respectively.
\begin{proposition}
\label{p3.1}
Let $f:M^{\prime}\rightarrow M$ be a smooth map between two  manifolds. Then:
\begin{enumerate}
\item[(i)] The mappings $\mu$ and $\nu$ induce an exact sequence of complexes
\begin{displaymath}
0\longrightarrow(\Omega^{\bullet}(M^{\prime}), d)\stackrel{\mu}{\longrightarrow}(\Omega^{\prime\bullet}(f),d_{\eta,f})\stackrel{\nu}{\longrightarrow}(\Omega^{\bullet-1}(M),-d)\longrightarrow0.
\end{displaymath}
\item[(ii)] This exact sequence induces a long exact cohomology sequence
\begin{equation}
\ldots\longrightarrow H^{p}_{dR}(M^{\prime})\stackrel{\mu^*}{\longrightarrow}H_{\eta}^p(f)\stackrel{\nu^*}{\longrightarrow}H^{p-1}_{dR}(M)\stackrel{\Delta^*_{p-1}}{\longrightarrow}H^{p+1}_{dR}(M^{\prime})\longrightarrow\ldots,
\label{f13}
\end{equation}
where the connecting homomorphism $\Delta^*_{p-1}$ is defined by
\begin{equation}
\Delta^*_{p-1}[\psi]=[-f^*(d\eta\wedge\psi)],\,\,{\rm for\,\,any}\,\,[\psi]\in H^{p-1}_{dR}(M).
\label{f14}
\end{equation}
\end{enumerate}
\end{proposition}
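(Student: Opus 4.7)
The plan is standard homological algebra: establish the short exact sequence of complexes in part (i) by direct verification, then apply the zig-zag (snake) construction to obtain the long exact sequence in part (ii) and identify the connecting homomorphism by a diagram chase.

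For part (i), I would carry out three quick verifications. First, chain-map compatibility: from the formula $d_{\eta,f}(\varphi,\psi)=(d\varphi-f^*(d\eta\wedge\psi),-d\psi)$ one reads off $d_{\eta,f}\mu(\varphi)=d_{\eta,f}(\varphi,0)=(d\varphi,0)=\mu(d\varphi)$, so $\mu$ is a chain map; similarly $\nu\,d_{\eta,f}(\varphi,\psi)=-d\psi=(-d)\,\nu(\varphi,\psi)$, which is precisely why the target complex must carry the differential $-d$ rather than $d$. This sign twist is essential for $\nu$ to be a chain map, yet is harmless at the level of cohomology groups since multiplication by $-1$ is a chain isomorphism. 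Second, injectivity and surjectivity: $\mu$ is injective because $(\varphi,0)=0$ forces $\varphi=0$, and $\nu$ is surjective because $\nu(0,\psi)=\psi$. Third, middle exactness: $\ker\nu=\{(\varphi,0):\varphi\in\Omega^p(M^\prime)\}=\operatorname{im}\mu$.

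For part (ii), the long exact sequence \eqref{f13} follows formally from (i) by the snake lemma, so the substantive task is to identify $\Delta^*_{p-1}$ explicitly. I would chase a closed representative $\psi\in\Omega^{p-1}(M)$ of a class $[\psi]\in H^{p-1}_{dR}(M)$ through the diagram: lift $\psi$ to $(0,\psi)\in\Omega^{\prime p}(f)$ via $\nu$, apply $d_{\eta,f}$ to obtain $d_{\eta,f}(0,\psi)=(-f^*(d\eta\wedge\psi),-d\psi)=(-f^*(d\eta\wedge\psi),0)$ using $d\psi=0$, and recognize this element as $\mu(-f^*(d\eta\wedge\psi))$. This yields $\Delta^*_{p-1}[\psi]=[-f^*(d\eta\wedge\psi)]\in H^{p+1}_{dR}(M^\prime)$; the representative is automatically $d$-closed since $d(d\eta\wedge\psi)=-d\eta\wedge d\psi=0$. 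Independence of the closed representative $\psi$ (well-definedness) is the usual snake-lemma argument and I would only sketch it.

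The main obstacle is purely bookkeeping rather than conceptual. The degree convention requires care: $\nu$ drops the degree by one, so the connecting map raises the total degree by two and sends $H^{p-1}_{dR}(M)$ to $H^{p+1}_{dR}(M^\prime)$. Combined with the sign twist $-d$ on the quotient complex, these conventions must be tracked consistently throughout the zig-zag. Beyond that, the proof reduces entirely to routine calculus using $d^2=0$, the naturality relation $f^*d=df^*$, and $d(d\eta)=0$.
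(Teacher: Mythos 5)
Your proof is correct and is exactly the standard argument the paper relies on: the paper states Proposition \ref{p3.1} without proof, treating the short exact sequence and the zig-zag identification of $\Delta^*_{p-1}$ as routine, and your verification of the chain-map conditions (including the necessity of the twisted differential $-d$ on the quotient) and your diagram chase lifting $\psi$ to $(0,\psi)$ reproduce precisely what is needed. One harmless slip: $d(d\eta\wedge\psi)=(-1)^{2}\,d\eta\wedge d\psi=+\,d\eta\wedge d\psi$, not $-\,d\eta\wedge d\psi$, though the conclusion that the representative is closed is unaffected since $d\psi=0$.
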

Since for any closed form $\psi\in\Omega^{p-1}(M)$ we have $f^*(d\eta\wedge\psi)=df^*(\eta\wedge\psi)$ the connecting homomorphism $\Delta^*_{p}$ vanishes and so we obtain the isomorphism:
\begin{equation}
H^{\bullet}_{\eta}(f)\cong H^{\bullet}_{dR}(M^{\prime})\oplus H^{\bullet-1}_{dR}(M).
\label{f15}
\end{equation}
Now, taking into account the isomorphisms \eqref{IIxx} and \eqref{f15} we obtain:
\begin{theorem}
If $f:M^{\prime}\rightarrow M$ is a difeomorphism then the following isomorphism hold:
\begin{equation}
\label{f16}
H_X^{\bullet}(f)\cong H_{\eta}^{\bullet}(f^{-1}).
\end{equation}
\end{theorem}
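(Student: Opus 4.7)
The theorem is an immediate consequence of the two isomorphisms \eqref{IIxx} and \eqref{f15} that have already been derived in the paper. My plan is therefore to prove it by composing these two isomorphisms, using that both cohomologies are identified with the same direct sum of de Rham cohomology groups, entirely independently of the specific choices of $X$ and $\eta$.

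First, I would apply \eqref{IIxx} to the given diffeomorphism $f:M^{\prime}\to M$ and the vector field $X\in\mathcal{X}(M^{\prime})$ to obtain
\begin{displaymath}
H^{\bullet}_X(f)\;\cong\; H^{\bullet}_{dR}(M)\oplus H^{\bullet-1}_{dR}(M^{\prime}).
\end{displaymath}
This identification uses no property of $f$ beyond smoothness, and in particular no property of $X$; its existence was guaranteed by Proposition~\ref{p2.1} and the vanishing of the connecting homomorphism $\delta^{*}_{p}$.

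Second, since $f$ is a diffeomorphism, its inverse $f^{-1}:M\to M^{\prime}$ is itself a smooth map, so I can apply \eqref{f15} with $f$ replaced by $f^{-1}$ and a chosen 1-form $\eta\in\Omega^{1}(M^{\prime})$. The key bookkeeping step is to swap the roles of the source and target manifolds: in \eqref{f15} the statement $H^{\bullet}_{\eta}(g)\cong H^{\bullet}_{dR}(N^{\prime})\oplus H^{\bullet-1}_{dR}(N)$ refers to a map $g:N^{\prime}\to N$, so for $g=f^{-1}$ with $N^{\prime}=M$ and $N=M^{\prime}$ it reads
\begin{displaymath}
H^{\bullet}_{\eta}(f^{-1})\;\cong\; H^{\bullet}_{dR}(M)\oplus H^{\bullet-1}_{dR}(M^{\prime}).
\end{displaymath}

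Composing the first isomorphism with the inverse of the second yields the required $H^{\bullet}_X(f)\cong H^{\bullet}_{\eta}(f^{-1})$. There is no serious mathematical obstacle; the only potential pitfall is the index swap just mentioned, and one should also note that the resulting isomorphism is not canonical and requires no compatibility between $X$ and $\eta$ — both are merely auxiliary data that drop out once one passes through the common model $H^{\bullet}_{dR}(M)\oplus H^{\bullet-1}_{dR}(M^{\prime})$.
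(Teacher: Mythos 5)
Your proposal is correct and is essentially identical to the paper's own argument: the paper likewise obtains the theorem by combining the isomorphisms \eqref{IIxx} and \eqref{f15} (the latter applied to $f^{-1}:M\to M^{\prime}$), both of which identify the respective cohomologies with $H^{\bullet}_{dR}(M)\oplus H^{\bullet-1}_{dR}(M^{\prime})$. Your explicit attention to the source/target swap in applying \eqref{f15} to $f^{-1}$ is exactly the bookkeeping the paper leaves implicit.
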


Finally, we notice that the $1$-differentiable cohomology induced by a holomorphic vector field can be generalized in the complex case with respect to a holomorphic map $f:M^\prime\rightarrow M$. More exactly, we can  consider a more general definition of  $\overline{\partial}_X$ on the set $\Omega^{p,q}(f)=\Omega^{p,q}(M)\oplus\Omega^{p,q-1}(M^{\prime})$ by setting
\begin{equation}
\overline{\partial}_{X,f}:\Omega^{p,q}(f)\rightarrow\Omega^{p,q+1}(f)\,,\,\overline{\partial}_{X,f}(\varphi,\psi)=(\overline{\partial}\varphi,\mathcal{L}_{X}f^*\varphi-\overline{\partial}\psi)\,,\,\,X\in\mathcal{X}(M^\prime).
\label{c3}
\end{equation}
An easy calculation shows that $\overline{\partial}_{X,f}^2=(0,0)$. Denote the cohomology groups of the complex $(\Omega^{p,\bullet}(f),\overline{\partial}_{X,f})$ by $H^{p,\bullet}_{X}(f)$ and we call it the \textit{relative $p$-th Dolbeault cohomology induced by the holomorphic vector field $X$} of $M^\prime$. As well as we seen if $M^{\prime}=M$ and $f=Id|_M$ then $\overline{\partial}_X=\overline{\partial}_{X,Id|_M}$ and $\widetilde{H}_X^{p,\bullet}(M)\equiv H^{p,\bullet}_{X}(Id|_M)$. We also notice that $\overline{\partial}_{X,f}$ is an antiderivation with respect to the exterior product from \eqref{xy1} extended to $\widetilde{\Omega}^{p,q}(f)$.

Using the same technique as above we  can consider the mappings $\overline{\alpha}:\Omega^{p,q-1}(M^{\prime})\rightarrow\Omega^{p,q}(f)$, $\overline{\alpha}(\psi)=(0,\psi)$ and $\overline{\beta}:\Omega^{p,q}(f)\rightarrow\Omega^{p,q}(M)$, $\overline{\beta}(\varphi,\psi)=\varphi$ for all $\varphi\in\Omega^{p,q}(M)$ and $\psi\in\Omega^{p,q-1}(M^{\prime})$, respectively. Then,  we have the following exact sequence of complexes:
\begin{displaymath}
0\longrightarrow(\Omega^{p,\bullet-1}(M^{\prime}), \overline{\partial})\stackrel{\overline{\alpha}}{\longrightarrow}(\Omega^{p,\bullet}(f),\overline{\partial}_{X,f})\stackrel{\overline{\beta}}{\longrightarrow}(\Omega^{p,\bullet}(M),\overline{\partial})\longrightarrow0.
\end{displaymath}
which induces a long exact cohomology sequence
\begin{equation}
\ldots\longrightarrow H^{p,q-1}(M^{\prime})\stackrel{\overline{\alpha}^*}{\longrightarrow}H_{X}^{p,q}(f)\stackrel{\overline{\beta}^*}{\longrightarrow}H^{p,q}(M)\stackrel{\overline{\delta}^*_{p,q}}{\longrightarrow}H^{p,q}(M^{\prime})\longrightarrow\ldots,
\label{c43}
\end{equation}
where the connecting homomorphism $\overline{\delta}^*_{p,q}$ is defined by
\begin{equation}
\overline{\delta}^*_{p,q}([\varphi])=[\mathcal{L}_{X}f^*\varphi],\,\,{\rm for\,\,any}\,\,[\varphi]\in H^{p,q}(M).
\label{c5}
\end{equation}
Taking into account that $X\in\mathcal{X}(M^\prime)$ is holomorphic and $f^*$ preserves the type of complex $(p,q)$ forms, a similar calculation as in the proof of Theorem \ref{t2.1} leads to
\begin{corollary}
\label{c2.2}
If $(M,g)$ is a compact K\"{a}hler manifold then we have the following isomorphism:
\begin{equation}
H^{p,\bullet}_X(f)\cong H^{p,\bullet}(M)\oplus H^{p,\bullet-1}(M^{\prime}).
\label{aa}
\end{equation}
\end{corollary}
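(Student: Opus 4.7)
The plan is to invoke the long exact cohomology sequence \eqref{c43} and show that the connecting homomorphism $\overline{\delta}^*_{p,q}$ vanishes, then construct an explicit splitting in direct analogy with the proof of Theorem \ref{t2.1}.

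For the vanishing, I would pick, for any Dolbeault class $[\varphi] \in H^{p,q}(M)$, a harmonic representative---available since $M$ is compact K\"ahler---which is simultaneously $d$-closed and $\overline{\partial}$-closed. Pulling back by the holomorphic map $f$, the form $f^*\varphi \in \Omega^{p,q}(M')$ remains of type $(p,q)$ and both $d$- and $\overline{\partial}$-closed. Since $X$ is holomorphic, $\mathcal{L}_X$ preserves bidegree and $\iota_X f^*\varphi \in \Omega^{p-1,q}(M')$, so decomposing $\mathcal{L}_X f^*\varphi = d(\iota_X f^*\varphi) = \partial(\iota_X f^*\varphi) + \overline{\partial}(\iota_X f^*\varphi)$ and comparing bidegrees $(p,q)$ vs.\ $(p-1,q+1)$ yields $\overline{\partial}(\iota_X f^*\varphi) = 0$ and $\mathcal{L}_X f^*\varphi = \partial(\iota_X f^*\varphi)$.

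Next I would repeat the maneuver from Theorem \ref{t2.1}: produce $\varphi' \in \Omega^{p-1,q-1}(M')$ such that $\iota_X f^*\varphi + \overline{\partial}\varphi'$ is $d$-closed (by replacing $\iota_X f^*\varphi$ by a $d$-closed representative of its $\overline{\partial}$-class), so that $\partial(\iota_X f^*\varphi) = -\partial\overline{\partial}\varphi' = \overline{\partial}(\partial\varphi')$ and therefore $\mathcal{L}_X f^*\varphi$ is $\overline{\partial}$-exact, giving $\overline{\delta}^*_{p,q}=0$. I expect this step to be the principal obstacle, since it rests on Hodge-theoretic input (effectively the $\partial\overline{\partial}$-lemma) for $(p{-}1,q)$-forms on $M'$, an ingredient parallel to the one used on $M$ in Theorem \ref{t2.1}.

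With $\overline{\delta}^*_{p,q}=0$ the long exact sequence \eqref{c43} collapses into short exact sequences $0 \to H^{p,q-1}(M') \to H^{p,q}_X(f) \to H^{p,q}(M) \to 0$. I would close the argument by writing down the splitting explicitly, verbatim from Theorem \ref{t2.1}: the map $[(\varphi,\psi)] \mapsto ([\varphi],[\partial\varphi'-\psi])$ with inverse $([\varphi],[\psi]) \mapsto [(\varphi,\partial\varphi'-\psi)]$, where $\varphi'$ is chosen as above, giving the isomorphism \eqref{aa}. Checking well-definedness and that the two maps compose to the identity on each side is a routine verification parallel to the absolute case.
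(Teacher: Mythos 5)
Your proposal follows the paper's own route exactly: the paper obtains \eqref{aa} from the long exact sequence \eqref{c43} by noting that ``a similar calculation as in the proof of Theorem \ref{t2.1}'' kills the connecting homomorphism $\overline{\delta}^*_{p,q}([\varphi])=[\mathcal{L}_X f^*\varphi]$, which is precisely the bidegree comparison and $\partial\overline{\partial}$-type argument you spell out, followed by the explicit splitting. Your observation that the Hodge-theoretic input is actually needed on $M^{\prime}$ (where $f^*\varphi$, $\imath_X f^*\varphi$ and the auxiliary $\varphi^{\prime}$ live) rather than on $M$ is well taken --- it identifies an imprecision in the stated hypothesis that the paper's one-line proof glosses over, so if anything your version is the more careful one.
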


\section{An application to harmonicity of $1$-differentiable forms}
\setcounter{equation}{0}

In this section we discuss some aspects concerning harmonicity of $1$-differentiable forms with respect to our operator in a particular case.

Let us consider a $n$-dimensional compact smooth manifold $M$, a closed $1$-form $\omega$ on $M$ and a Riemannian metric $g$ on $M$ such that $\omega$ is parallel with respect to $g$. We consider the vector field $U\in\mathcal{X}(M)$ characterized by the condition
\begin{equation}
\label{IV1}
\omega(X)=g(X,U)\,,\,\,\forall\,X\in\mathcal{X}(M).
\end{equation}
Also, we consider the codifferential operator $\delta$ given by
\begin{equation}
\label{IV2}
\delta\varphi=(-1)^{np+n+1}(\star\circ d\circ\star)\varphi\,,\,\forall\,\varphi\in\Omega^p(M),
\end{equation}
where $\star$ is the Hodge star operator, see for instance \cite{Go,Va}. 

Using that $\omega$ is parallel and that $U$ is Killing, we have the well known relations (see \cite{L-L-M2})
\begin{equation}
\label{IV3}
\mathcal{L}_U=-\delta\circ e(\omega)-e(\omega)\circ\delta\,,\,\delta\circ\mathcal{L}_U=\mathcal{L}_U\circ\delta,
\end{equation}
where $e(\omega)\varphi=\omega\wedge\varphi$ for every $\varphi\in\Omega^\bullet(M)$.

Now we define the following codifferential induced by the vector field $U$ associated to the operator $\widetilde{d}_U$ on the space $\widetilde{\Omega}^p(M)$ of $1$-differentiable $p$-forms:
\begin{equation}
\label{IV4}
\widetilde{\delta}_U:\widetilde{\Omega}^{p}(M)\rightarrow\widetilde{\Omega}^{p-1}(M)\,,\,\widetilde{\delta}_U(\varphi,\psi)=(\delta\varphi+\mathcal{L}_U\psi,-\delta\psi).
\end{equation}
By straightforward calculus, using the second relation of \eqref{IV3}, we obtain $\widetilde{\delta}_U^2=(0,0)$.

Using the natural scalar product $\langle,\rangle$ on the space $\Omega^\bullet(M)$ given by
\begin{equation}
\label{IV5}
\langle,\rangle:\Omega^p(M)\times\Omega^p(M)\rightarrow \mathbb{R}\,,\,(\varphi,\varphi^\prime)\mapsto\langle\varphi,\varphi^\prime\rangle=\int_M\varphi\wedge\star\varphi^\prime,
\end{equation}
we also define a scalar product $\langle\langle,\rangle\rangle$ on $\widetilde{\Omega}^\bullet(M)$ by
\begin{equation}
\langle\langle(\varphi,\psi),(\varphi^{\prime},\psi^{\prime})\rangle\rangle=\langle\varphi,\varphi^{\prime}\rangle+\langle\psi,\psi^{\prime}\rangle,\,\,\forall\,\,(\varphi,\psi),\,(\varphi^{\prime},\psi^{\prime})\in\widetilde{\Omega}^p(M).
\label{IV6}
\end{equation}
We have

\begin{proposition} 
The operator $\widetilde{d}_U$ is the $\langle\langle,\rangle\rangle$--adjoint of $\widetilde{\delta}_U$ and conversely. 
\end{proposition}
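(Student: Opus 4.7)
The plan is to verify the adjoint identity $\langle\langle \widetilde{d}_U(\varphi,\psi),(\alpha,\beta)\rangle\rangle = \langle\langle (\varphi,\psi),\widetilde{\delta}_U(\alpha,\beta)\rangle\rangle$ for arbitrary $(\varphi,\psi)\in\widetilde{\Omega}^p(M)$ and $(\alpha,\beta)\in\widetilde{\Omega}^{p+1}(M)$ by pure bookkeeping: expand both pairings using \eqref{IV6}, \eqref{II1} and \eqref{IV4}, and reduce the equality to the classical $L^2$-adjointness of $d$ and $\delta$, together with an adjointness relation for $\mathcal{L}_U$ on $(\Omega^\bullet(M),\langle,\rangle)$.

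Concretely, I would first compute
\begin{displaymath}
\langle\langle \widetilde{d}_U(\varphi,\psi),(\alpha,\beta)\rangle\rangle = \langle d\varphi,\alpha\rangle + \langle \mathcal{L}_U\varphi,\beta\rangle - \langle d\psi,\beta\rangle
\end{displaymath}
and compare with
\begin{displaymath}
\langle\langle (\varphi,\psi),\widetilde{\delta}_U(\alpha,\beta)\rangle\rangle = \langle\varphi,\delta\alpha\rangle + \langle\varphi,\mathcal{L}_U\beta\rangle - \langle\psi,\delta\beta\rangle.
\end{displaymath}
The classical identities $\langle d\varphi,\alpha\rangle=\langle\varphi,\delta\alpha\rangle$ and $\langle d\psi,\beta\rangle=\langle\psi,\delta\beta\rangle$ match the first and third terms on each side. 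The whole equality therefore collapses to the single scalar identity $\langle\mathcal{L}_U\varphi,\beta\rangle = \langle\varphi,\mathcal{L}_U\beta\rangle$. The converse statement (that $\widetilde{d}_U$ is the $\langle\langle,\rangle\rangle$-adjoint of $\widetilde{\delta}_U$) requires no separate work, being symmetric to the above.

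The real content is thus the adjointness of $\mathcal{L}_U$ on the $L^2$-pairing of forms. For this I would use the standing hypothesis that $\omega$ is parallel, which forces $U=\omega^\sharp$ to be parallel and in particular Killing, combined with the first relation of \eqref{IV3}, namely $\mathcal{L}_U = -\delta\circ e(\omega) - e(\omega)\circ\delta$. Since $e(\omega)=\omega\wedge\cdot$ admits $\iota_U$ as its $\langle,\rangle$-adjoint (metric duality between $\omega$ and $U$), formal adjunction in this identity together with Cartan's formula $\mathcal{L}_U = d\iota_U + \iota_U d$ yields the required relation for $\mathcal{L}_U$. This is the one and only place where the geometric assumptions on $(M,g,\omega)$ are actually used; the rest of the proof is bilinear manipulation on the pair definitions \eqref{II1} and \eqref{IV4}. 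The careful sign bookkeeping in reconciling this Lie-derivative adjunction with the signs appearing in $\widetilde{d}_U$ and $\widetilde{\delta}_U$ is the only nontrivial point I anticipate in writing the argument out.
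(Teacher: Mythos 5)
Your expansion of the two pairings and the reduction of the whole statement to (i) the classical adjointness $\langle d\varphi,\alpha\rangle=\langle\varphi,\delta\alpha\rangle$ and (ii) the single remaining identity $\langle\mathcal{L}_U\varphi,\beta\rangle=\langle\varphi,\mathcal{L}_U\beta\rangle$ is exactly the paper's own proof, which at that point simply invokes that $d$ is the $\langle,\rangle$--adjoint of $\delta$ and that $\mathcal{L}_U$ is $\langle,\rangle$--self-adjoint. The gap is in your proposed justification of (ii): the ``sign bookkeeping'' you defer to the write-up does not come out in your favour. Taking $\langle,\rangle$--adjoints in the first relation of \eqref{IV3}, using $\delta^{*}=d$, $e(\omega)^{*}=\imath_U$ and the fact that adjunction reverses composition, gives
\begin{displaymath}
\mathcal{L}_U^{*}=\bigl(-\delta\circ e(\omega)-e(\omega)\circ\delta\bigr)^{*}=-\imath_U\circ d-d\circ\imath_U=-\mathcal{L}_U
\end{displaymath}
by Cartan's formula. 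So the route you describe establishes that $\mathcal{L}_U$ is \emph{skew}-adjoint, not self-adjoint. This is in fact the expected answer: $U$ is Killing, its flow acts by isometries preserving the pairing \eqref{IV5}, and hence $\langle\mathcal{L}_U\varphi,\beta\rangle+\langle\varphi,\mathcal{L}_U\beta\rangle=\int_M\mathcal{L}_U(\varphi\wedge\star\beta)=\int_M d\,\imath_U(\varphi\wedge\star\beta)=0$ on the compact $M$.

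Consequently the two middle terms $\langle\mathcal{L}_U\varphi,\beta\rangle$ and $\langle\varphi,\mathcal{L}_U\beta\rangle$ in your displayed expansions differ by a sign rather than agreeing, and the argument does not close except where $\mathcal{L}_U$ acts trivially. Note that this is not merely a defect of your derivation: with the definitions \eqref{II1} and \eqref{IV4} the genuine $\langle\langle,\rangle\rangle$--adjoint of $\widetilde{d}_U$ is $(\alpha,\beta)\mapsto(\delta\alpha-\mathcal{L}_U\beta,-\delta\beta)$, so either the sign of $\mathcal{L}_U\psi$ in \eqref{IV4} must be reversed or the self-adjointness of $\mathcal{L}_U$ needs an argument that neither you nor the paper supplies. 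As written, your proof (like the paper's) rests on an adjointness claim for $\mathcal{L}_U$ that your own computation refutes.
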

\begin{proof}
It is sufficient to prove that $\langle\langle\widetilde{d}_U(\varphi,\psi),(\varphi^{\prime},\psi^{\prime})\rangle\rangle=\langle\langle(\varphi,\psi),\widetilde{\delta}_U(\varphi^{\prime},\psi^{\prime})\rangle\rangle$, for every $(\varphi,\psi)\in\widetilde{\Omega}^{p-1}(M)$ and $(\varphi^{\prime},\psi^{\prime})\in\widetilde{\Omega}^{p}(M)$. By \eqref{IV6} we have
\begin{equation}
\langle\langle\widetilde{d}_U(\varphi,\psi),(\varphi^{\prime},\psi^{\prime})\rangle\rangle=\langle d\varphi,\varphi^{\prime}\rangle+\langle \mathcal{L}_U\varphi,\psi^\prime\rangle-\langle d\psi,\psi^{\prime}\rangle
\label{IV7}
\end{equation}
and 
\begin{equation}
\langle\langle(\varphi,\psi),\widetilde{\delta}_U(\varphi^{\prime},\psi^{\prime})\rangle\rangle=\langle \varphi,\delta\varphi^{\prime}\rangle+\langle \varphi,\mathcal{L}_U\psi^\prime\rangle-\langle \psi,\delta\psi^{\prime}\rangle.
\label{IV8}
\end{equation}
Now, the result follows taking into acount that $d$ is the $\langle,\rangle$--adjoint of $\delta$ and $\mathcal{L}_U$ is $\langle,\rangle$--self-adjoint.
\end{proof}

Let us define now a  $\widetilde{\Delta}_U$--Laplacian associated to the vector field $U$ on the space $\widetilde{\Omega}^p(M)$ by
$\widetilde{\Delta}=\widetilde{d}_U\widetilde{\delta}_U+\widetilde{\delta}_U\widetilde{d}_U$. Then by direct calculations, we obtain 
\begin{equation}
\widetilde{\Delta}_U(\varphi,\psi)=(\Delta\varphi+\mathcal{L}_U^2\varphi,\Delta\psi+\mathcal{L}_U^2\psi),\,\forall\,(\varphi,\psi)\in\widetilde{\Omega}^p(M).
\label{IV9}
\end{equation}
Hence, the $1$-differentiable form $(\varphi,\psi)\in\widetilde{\Omega}^p(M)$ is \textit{$\widetilde{\Delta}_U$--harmonic}, that is $\widetilde{\Delta}_U(\varphi,\psi)=(0,0)$ iff
\begin{equation}
\label{IV10}
\Delta\varphi+\mathcal{L}_U^2\varphi=0\,\,{\rm and}\,\,\Delta\psi+\mathcal{L}_U^2\psi=0.
\end{equation}
\begin{proposition}
The $1$-differentiable form $(\varphi,\psi)$ is $\widetilde{\Delta}_U$--harmonic iff $\widetilde{d}_U(\varphi,\psi)=(0,0)$ and $\widetilde{\delta}_U(\varphi,\psi)=(0,0)$.
\end{proposition}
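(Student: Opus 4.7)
The plan is to reduce the statement to the standard Hodge-theoretic ``harmonic $\Leftrightarrow$ closed and co-closed'' argument, but applied with respect to the scalar product $\langle\langle,\rangle\rangle$ on $\widetilde{\Omega}^{p}(M)$ introduced in \eqref{IV6} and the adjointness established in the preceding proposition.

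The ``if'' direction is immediate: if $\widetilde{d}_U(\varphi,\psi)=(0,0)$ and $\widetilde{\delta}_U(\varphi,\psi)=(0,0)$, then by the very definition $\widetilde{\Delta}_U=\widetilde{d}_U\widetilde{\delta}_U+\widetilde{\delta}_U\widetilde{d}_U$ both summands vanish on $(\varphi,\psi)$, hence $\widetilde{\Delta}_U(\varphi,\psi)=(0,0)$. This step requires nothing beyond linearity.

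For the ``only if'' direction, I would pair $\widetilde{\Delta}_U(\varphi,\psi)$ with $(\varphi,\psi)$ with respect to $\langle\langle,\rangle\rangle$ and invoke the previous proposition, which asserts that $\widetilde{d}_U$ and $\widetilde{\delta}_U$ are mutually adjoint. This gives
\begin{equation*}
\langle\langle\widetilde{\Delta}_U(\varphi,\psi),(\varphi,\psi)\rangle\rangle
=\langle\langle\widetilde{\delta}_U(\varphi,\psi),\widetilde{\delta}_U(\varphi,\psi)\rangle\rangle
+\langle\langle\widetilde{d}_U(\varphi,\psi),\widetilde{d}_U(\varphi,\psi)\rangle\rangle.
\end{equation*}
If $\widetilde{\Delta}_U(\varphi,\psi)=(0,0)$, the left-hand side vanishes, so both non-negative terms on the right must individually vanish.

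The last point to justify is that $\langle\langle,\rangle\rangle$ is positive definite, so that vanishing of each norm squared forces vanishing of the corresponding $1$-differentiable form. This is immediate from \eqref{IV6}: for $(\alpha,\beta)\in\widetilde{\Omega}^{p}(M)$ one has $\langle\langle(\alpha,\beta),(\alpha,\beta)\rangle\rangle=\langle\alpha,\alpha\rangle+\langle\beta,\beta\rangle$, and on the compact Riemannian manifold $(M,g)$ the classical global scalar product \eqref{IV5} is positive definite on $\Omega^{p}(M)$, so the sum vanishes exactly when both $\alpha$ and $\beta$ vanish. Hence $\widetilde{d}_U(\varphi,\psi)=(0,0)$ and $\widetilde{\delta}_U(\varphi,\psi)=(0,0)$, completing the argument. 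There is no real obstacle in this proof: once the adjoint property of $\widetilde{d}_U$ and $\widetilde{\delta}_U$ is at hand and the bilinear form $\langle\langle,\rangle\rangle$ is known to be positive definite, the result is the standard one-line Hodge argument.
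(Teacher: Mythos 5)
Your argument is correct and is essentially the paper's own proof: both pair $\widetilde{\Delta}_U(\varphi,\psi)$ with $(\varphi,\psi)$, use the adjointness of $\widetilde{d}_U$ and $\widetilde{\delta}_U$ to rewrite this as $\langle\langle\widetilde{\delta}_U(\varphi,\psi),\widetilde{\delta}_U(\varphi,\psi)\rangle\rangle+\langle\langle\widetilde{d}_U(\varphi,\psi),\widetilde{d}_U(\varphi,\psi)\rangle\rangle$, and conclude by positive definiteness of $\langle\langle,\rangle\rangle$ on the compact manifold. You merely spell out the trivial ``if'' direction and the positivity step, which the paper leaves implicit.
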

\begin{proof}
It follows directly from 
\begin{eqnarray*}
\langle\langle\widetilde{\Delta}_U(\varphi,\psi),(\varphi,\psi)\rangle\rangle
&=&\langle\langle\widetilde{d}_U\widetilde{\delta}_U(\varphi,\psi)+\widetilde{\delta}_U\widetilde{d}_U(\varphi,\psi),(\varphi,\psi)\rangle\rangle\\
&=& \langle\langle\widetilde{\delta}_U(\varphi,\psi),\widetilde{\delta}_U(\varphi,\psi)\rangle\rangle+ \langle\langle\widetilde{d}_U(\varphi,\psi),\widetilde{d}_U(\varphi,\psi)\rangle\rangle.
\end{eqnarray*}
\end{proof}
Also, if we continue the calculus from the proof of above proposition, we obtain
\begin{equation}
\label{IV11}
\langle\langle\widetilde{\Delta}_U(\varphi,\psi),(\varphi,\psi)\rangle\rangle=\langle\langle(\varphi,\psi),\widetilde{\Delta}_U(\varphi,\psi)\rangle\rangle
\end{equation}
which says that the $\widetilde{\Delta}_U$-Laplacian is $\langle\langle,\rangle\rangle$-self-adjoint.

\begin{example}
Let us consider a smooth function $f\in C^\infty(M)$ with the property that $\mathcal{L}_Uf=kf$, $k\in\mathbb{R}$. Then, the $1$-differentiable $0$-form $(f,0)\in\widetilde{\Omega}^0(M)$ is $\widetilde{\Delta}_U$-harmonic iff the function $f$ satisfies the Klein-Gordon type equation
\begin{equation}
\label{IV15}
\Delta f+k^2f=0,
\end{equation}
with mass term $m^2=k^2$. 
\end{example}

A first interpretation of formula \eqref{IV9} in terms of usual harmonicity can be obtained  as follows. As $U$ is a Killing vector field, it is well known that in this case $\mathcal{L}_U\varphi=0$ for every harmonic form $\varphi\in\Omega^\bullet(M)$. Thus, we have
\begin{proposition}
If $\varphi\in\Omega^p(M)$ and $\psi\in\Omega^{p-1}(M)$ are both harmonic then the $1$-differentiable $p$-form $(\varphi,\psi)$ is $\widetilde{\Delta}_U$-harmonic.
\end{proposition}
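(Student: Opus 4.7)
The plan is to combine the explicit formula \eqref{IV9} for $\widetilde{\Delta}_U$ with the classical fact (already recalled just before the statement) that on a compact Riemannian manifold a Killing vector field annihilates every harmonic form via the Lie derivative. The argument is essentially bookkeeping once this fact is in hand, so I will just lay out the logical order.

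First, I would invoke \eqref{IV9}, which asserts
\begin{equation*}
\widetilde{\Delta}_U(\varphi,\psi)=(\Delta\varphi+\mathcal{L}_U^2\varphi,\,\Delta\psi+\mathcal{L}_U^2\psi).
\end{equation*}
So to prove that $(\varphi,\psi)$ is $\widetilde{\Delta}_U$-harmonic, it suffices to show that both components on the right vanish. By hypothesis $\Delta\varphi=0$ and $\Delta\psi=0$, so what remains is to verify $\mathcal{L}_U^2\varphi=0$ and $\mathcal{L}_U^2\psi=0$.

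Next, I would apply the cited fact that $\mathcal{L}_U\chi=0$ for every harmonic form $\chi$, which is valid because $U$ is Killing (this is where the Killing assumption is essential: $\mathcal{L}_U$ commutes with $\Delta$, and a standard integration-by-parts argument on the compact manifold $M$ forces $\mathcal{L}_U\chi$ itself to be harmonic and orthogonal to the harmonic space, hence zero). Applied to the harmonic forms $\varphi$ and $\psi$, this immediately gives $\mathcal{L}_U\varphi=0$ and $\mathcal{L}_U\psi=0$, and therefore $\mathcal{L}_U^2\varphi=\mathcal{L}_U(\mathcal{L}_U\varphi)=0$ and likewise $\mathcal{L}_U^2\psi=0$. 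Substituting back into \eqref{IV9} concludes that $\widetilde{\Delta}_U(\varphi,\psi)=(0,0)$.

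There is essentially no obstacle here, since the two ingredients are both already in place: the formula \eqref{IV9} is established by direct computation earlier, and the Killing-harmonic identity is standard Hodge theory for compact Riemannian manifolds. The only mild care needed is to make clear that harmonicity of the scalar components $\varphi$ and $\psi$ is taken in the usual Hodge sense (i.e. $\Delta\varphi=0$, equivalently $d\varphi=0$ and $\delta\varphi=0$ on the compact $M$), so that the Killing-invariance result applies to each component separately before being packaged into the $1$-differentiable form.
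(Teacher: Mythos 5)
Your proof is correct and follows exactly the route the paper itself takes: apply formula \eqref{IV9} and use the standard fact that the Killing field $U$ satisfies $\mathcal{L}_U\chi=0$ for every harmonic form $\chi$ on the compact manifold $M$, so both components $\Delta\varphi+\mathcal{L}_U^2\varphi$ and $\Delta\psi+\mathcal{L}_U^2\psi$ vanish. No issues to report.
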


\begin{remark}
We notice that a similar Laplacian can be defined for compact Sasakian manifolds (or more generally for $K$-contact manifolds) with respect to the Reeb vector field $\xi$ which is a Killing vector field.
\end{remark}

Another interpretation of formula \eqref{IV9} can be obtained involving the Lichnerowicz-Laplacian. More exactly, as $\omega$ is a closed $1$-form on $M$ there is an associated Lichnerowicz differential, see \cite{G-L, L-L-M1, L}, $d_\omega:\Omega^p(M)\rightarrow\Omega^{p+1}(M)$ defined by 
\begin{equation}
\label{IV12}
d_\omega\varphi=d\varphi+e(\omega)\varphi\,,\,\forall\,\varphi\in\Omega^p(M).
\end{equation}
By direct calculus we easily obtain that $d_\omega^2=0$ and the cohomology $H^\bullet_\omega(M)$ of the resulting complex $(\Omega^\bullet(M),d_\omega)$ is called the Lichnerowicz cohomology of $(M,\omega)$. Also, a codifferential operator $\delta_\omega:\Omega^p(M)\rightarrow\Omega^{p-1}(M)$ associated to $\omega$ is given by (see \cite{G-L})
\begin{equation}
\label{IV13}
\delta_\omega=\delta+\imath_U.
\end{equation}
Then, the Lichnerowicz-Laplacian is defined as $\Delta_\omega=d_\omega\delta_\omega+\delta_\omega d_\omega$ and by direct calculus, taking into account the first relation of \eqref{IV3} we get
\begin{equation}
\label{IV14}
\Delta_\omega\varphi=\Delta\varphi+\varphi\,,\,\forall\,\varphi\in\Omega^p(M).
\end{equation}
Also, using an associated Hodge decomposition (with respect to the operators $\Delta_\omega$, $d_\omega$ and $\delta_\omega$, see \cite{G-L}), it is proved, see \cite{L-L-M2}, that for every $\Delta_\omega$-harmonic form $\varphi$ we have $\mathcal{L}_U\varphi=-\varphi$. 

Hence, we have the following result
\begin{proposition}
If $\varphi\in\Omega^p(M)$ and $\psi\in\Omega^{p-1}(M)$ are both $\Delta_\omega$-harmonic then the $1$-differentiable $p$-form $(\varphi,\psi)$ is $\widetilde{\Delta}_U$-harmonic.
\end{proposition}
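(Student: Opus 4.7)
The plan is to read off the conclusion directly from formula \eqref{IV9} by evaluating both of its components on $\Delta_\omega$-harmonic forms, using the two key identities already available in this subsection: the pointwise identity $\Delta_\omega=\Delta+\mathrm{Id}$ from \eqref{IV14}, and the property (cited from \cite{L-L-M2}) that every $\Delta_\omega$-harmonic form $\alpha$ satisfies $\mathcal{L}_U\alpha=-\alpha$. So no new analytic input is needed; the work is purely algebraic.

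First I would fix a form $\alpha\in\Omega^\bullet(M)$ that is $\Delta_\omega$-harmonic and extract the two consequences simultaneously. From $\Delta_\omega\alpha=0$ and \eqref{IV14} we obtain $\Delta\alpha=-\alpha$. From the quoted Hodge-type result for the Lichnerowicz-Laplacian we obtain $\mathcal{L}_U\alpha=-\alpha$, which upon applying $\mathcal{L}_U$ a second time yields $\mathcal{L}_U^{2}\alpha=-\mathcal{L}_U\alpha=\alpha$. Adding these gives the single pointwise identity
\begin{equation*}
\Delta\alpha+\mathcal{L}_U^{2}\alpha=-\alpha+\alpha=0,
\end{equation*}
valid for every $\Delta_\omega$-harmonic $\alpha$.

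Applying this identity separately to $\alpha=\varphi\in\Omega^p(M)$ and to $\alpha=\psi\in\Omega^{p-1}(M)$, and then feeding the two resulting equations into the explicit formula \eqref{IV9} for $\widetilde{\Delta}_U$, gives
\begin{equation*}
\widetilde{\Delta}_U(\varphi,\psi)=\bigl(\Delta\varphi+\mathcal{L}_U^{2}\varphi,\,\Delta\psi+\mathcal{L}_U^{2}\psi\bigr)=(0,0),
\end{equation*}
which is exactly the $\widetilde{\Delta}_U$-harmonicity of $(\varphi,\psi)$. There is essentially no obstacle here: the only step that is not a one-line substitution is verifying that $\mathcal{L}_U^{2}\alpha=\alpha$ follows from $\mathcal{L}_U\alpha=-\alpha$, and that is just applying $\mathcal{L}_U$ to both sides and using linearity. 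The statement is really a direct consequence of combining \eqref{IV9}, \eqref{IV14}, and the cited eigenvalue property of $\mathcal{L}_U$ on Lichnerowicz-harmonic forms.
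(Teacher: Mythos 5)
Your proposal is correct and follows exactly the argument the paper intends: the paper states this proposition without a written proof, but the preceding paragraph supplies precisely the two ingredients you use, namely $\Delta_\omega=\Delta+\mathrm{Id}$ from \eqref{IV14} and $\mathcal{L}_U\alpha=-\alpha$ for $\Delta_\omega$-harmonic $\alpha$, which combine with \eqref{IV9} just as you describe.
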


\section*{Acknowledgement} The second author is supported by the Sectorial Operational Program Human Resources Development (SOP HRD), financed from the European Social Fund and by the Romanian Government under the Project number POSDRU/159/1.5/S/134378 .

\noindent
Mircea Crasmareanu \\
Faculty of Mathematics \newline
University "Al. I. Cuza" \newline
Address: Ia\c si, 700506, Bd. Carol I, no. 11, Rom\^ania \newline
email: \textit{mcrasm@uaic.ro}

\medskip
\noindent
Cristian Ida\\
Department of Mathematics and Computer Science\\
University Transilvania of Bra\c{s}ov\\
Address: Bra\c{s}ov 500091, Str. Iuliu Maniu 50, Rom\^{a}nia\\
email: \textit{cristian.ida@unitbv.ro}

\medskip
\noindent 
Paul Popescu\\
Department of Applied Mathematics\\
 University of Craiova\\
Address: Craiova, 200585,  Str. Al. Cuza, No. 13,  Rom\^{a}nia\\
 email:\textit{paul$_{-}$p$_{-}$popescu@yahoo.com}

\end{document}